\theoremstyle{plain}
\newtheorem*{theorem*}{Theorem}
\newtheorem{theorem}{Theorem}
\newtheorem{definition}{Definition}
\newtheorem{lemma}{Lemma}
\newtheorem{claim}[theorem]{Claim}
\newcommand{\p}{\boldsymbol{p}}
\newcommand{\x}{\boldsymbol{x}}
\newcommand{\bs}{\boldsymbol{s}}
\newcommand{\R}{\mathbb{R}}
\newcommand{\N}{\mathcal{N}}
\newcommand{\s}{\mathcal{S}}
\DeclareMathOperator{\cl}{closure}
\DeclareMathOperator{\conv}{conv}
\DeclareMathOperator{\Tv}{Tv}
\title{Tverberg-Type Theorems with Trees and Cycles \\as (Nerve) Intersection Patterns.}
\author[1]{J.A. De Loera}
\author[2]{T. Hogan}
\author[3]{D. Oliveros}
\author[4]{D. Yang}
\address[J. A. De Loera, T. A. Hogan, D. Yang]{Department of Mathematics, University of California, Davis}
\email{deloera@math.ucdavis.edu, tahogan@math.ucdavis.edu, domyang@ucdavis.edu}
\address[D. Oliveros]{Instituto de Matem\'aticas, Universidad Nacional Aut\'onoma de M\'exico.}
\email{dolivero@matem.unam.mx}
\begin{document}

\maketitle
\begin{abstract}
Tverberg's theorem says that a set with sufficiently many points in $\R^d$ can always be partitioned into $m$ parts so that the
$(m-1)$-simplex is the (nerve) intersection pattern of the convex hulls of the parts. The main results of our paper demonstrate that, 
Tverberg's theorem is but a special case of a more general situation. Given sufficiently many points, all trees and cycles can also 
be induced by at least one partition of a point set.  

\end{abstract}

\noindent {\em Math Subject Classification:} {Primary 90C15, 90C11, 90C48. Secondary 52A35, 52A01}
\section{Introduction}

\noindent The celebrated theorem of Helge Tverberg states (see \cite{TvSurvey} and the references therein): 

\begin{theorem*}[Tverberg 1966 \cite{tv}] Every set $S$ with at least $(d+1)(m-1)+1$ points in Euclidean $d$-space $\R^d$ can 
be partitioned into $m$ parts $\mathcal{P}=S_1, \dots, S_m$  such that all the convex hulls of these parts have nonempty intersection. 
The special case of a bi-partition, $m=2$, is called \emph{Radon's lemma}.
\end{theorem*}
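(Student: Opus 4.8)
The plan is to reduce Tverberg's theorem to the Colorful Carath\'eodory theorem via Sarkaria's tensor trick, which is the cleanest route I know. Recall the Colorful Carath\'eodory theorem: if $X_1,\dots,X_{n+1}$ are subsets of $\R^n$, each containing a fixed point $\q$ in its convex hull, then one can choose a transversal $\z_i\in X_i$ with $\q\in\conv\{\z_1,\dots,\z_{n+1}\}$. I will take this as a known tool; its own proof is a short local-exchange/optimization argument and does not rely on the statement we are after.

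First I would fix notation and lift. Let $\x_1,\dots,\x_N$ be the given points with $N=(d+1)(m-1)+1$, and pass to $\widehat{\x}_i=(\x_i,1)\in\R^{d+1}$, so that the extra coordinate bookkeeps affine combinations. Next I would pick $m$ vectors $\w_1,\dots,\w_m\in\R^{m-1}$ forming the vertices of a simplex centered at the origin, so that $\sum_{j=1}^m\w_j=\boldsymbol 0$ and (up to scalar) this is the \emph{only} linear dependence among them. The key construction is then the tensor: inside $\R^{(d+1)(m-1)}=\R^{d+1}\otimes\R^{m-1}$, for each index $i$ define the color class $C_i=\{\widehat{\x}_i\otimes\w_j:\ j=1,\dots,m\}$. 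Since $\tfrac1m\sum_j\w_j=\boldsymbol 0$, the origin lies in $\conv(C_i)$ for every $i$.

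At this point I have $N=(d+1)(m-1)+1$ color classes living in dimension $(d+1)(m-1)$, which is exactly the count Colorful Carath\'eodory requires. Applying it with $\q=\boldsymbol 0$ produces a transversal: for each $i$ a choice $j(i)$ and weights $\lambda_i\ge 0$ with $\sum_i\lambda_i=1$ and $\sum_i\lambda_i\,\bigl(\widehat{\x}_i\otimes\w_{j(i)}\bigr)=\boldsymbol 0$. Grouping the points by the part they were sent to, with $S_k=\{i:j(i)=k\}$, this rearranges to $\sum_{k=1}^m\bigl(\sum_{i\in S_k}\lambda_i\widehat{\x}_i\bigr)\otimes\w_k=\boldsymbol 0$.

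The decoding is where the real content sits, and it is the step I expect to be the main obstacle to state carefully. The crux is the linear-algebra lemma that, because the sole dependence among the $\w_k$ is $\sum_k\w_k=\boldsymbol 0$, any vanishing combination $\sum_k\boldsymbol u_k\otimes\w_k=\boldsymbol 0$ forces all the $\boldsymbol u_k$ to coincide (expand each $\boldsymbol u_k$ in a basis of $\R^{d+1}$ and read off one tensor slot at a time). Applying this to the partial barycenters $\boldsymbol u_k=\sum_{i\in S_k}\lambda_i\widehat{\x}_i$ makes them all equal to a common $(\p,\mu)\in\R^{d}\times\R$; inspecting the last coordinate gives $\mu=\sum_{i\in S_k}\lambda_i=\tfrac1m>0$ for each $k$, so every part $S_k$ is nonempty, and rescaling the first $d$ coordinates by $m$ shows the single point $m\p$ lies in $\conv\{\x_i:i\in S_k\}$ for all $k$. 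That point witnesses the intersecting partition. The two delicate things to pin down are the dimension bookkeeping, so that the Colorful Carath\'eodory count comes out tight, and the relations lemma that forces the barycenters to agree.
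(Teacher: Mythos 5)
Your proposal is correct, but there is nothing in the paper to compare it against: the paper never proves Tverberg's theorem. It quotes it as the classical starting point, with a citation to Tverberg's 1966 paper, and all of the paper's own proofs (trees, cycles, caterpillars) are built \emph{on top of} this black box, using only Radon's lemma, Ham--Sandwich, Erd\H{o}s--Szekeres, and cyclic-polytope machinery. What you have written is the standard Sarkaria proof, in its B\'ar\'any--Onn simplification, and the details check out: the lift $\x_i\mapsto(\x_i,1)$, the choice of simplex vertices $\w_1,\dots,\w_m\in\R^{m-1}$ whose only dependence up to scalar is $\sum_j\w_j=\boldsymbol{0}$, the count that $(d+1)(m-1)+1$ color classes is exactly $n+1$ for the ambient dimension $n=(d+1)(m-1)$, and the decoding lemma. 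For the latter, the clean way to say it is that the kernel of the linear map $(c_1,\dots,c_m)\mapsto\sum_k c_k\w_k$ from $\R^m$ to $\R^{m-1}$ is one-dimensional, spanned by $(1,\dots,1)$; reading the tensor identity $\sum_k \boldsymbol{u}_k\otimes\w_k=\boldsymbol{0}$ one coordinate of $\R^{d+1}$ at a time then forces all $\boldsymbol{u}_k$ equal, exactly as you argue. The last-coordinate computation giving $\mu=1/m$ does double duty, certifying that every part $S_k$ is nonempty and rescaling to exhibit the common point $m\p$ in every $\conv\{\x_i: i\in S_k\}$. Two minor remarks: first, the theorem says ``at least'' $(d+1)(m-1)+1$ points, so you should run your argument on exactly that many and absorb any leftover points into one part, which only enlarges its hull and preserves the common point; second, your route relies on the Colorful Carath\'eodory theorem (B\'ar\'any, 1982), so it is historically a later and much shorter proof than Tverberg's original inductive moving argument --- a perfectly legitimate choice, and arguably the cleanest one available, but be aware it is not ``the 1966 proof'' being cited.
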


The nerve (intersection pattern) of the convex hulls in Tverberg's theorem is very specific, a simplex; our paper investigates other possible nerves. 
Informally, the main results of our paper demonstrate that, given sufficiently many points, other kinds of nerves can always be induced by a suitable 
partition of the point set. In particular, we show that any tree or cycle can be induced as the nerve. We will see this depends on a universal constant depending only on the dimension too.

To state our results precisely we begin with some terminology and notation typical of geometric topological combinatorics (see \cite{Mbook,Tancer} 
for details, especially on simplicial complexes discussed here). Let $\mathcal{F} = \{F_1, \dots, F_m\}$ be a family of convex sets in $\R^d$.  
The \emph{nerve}  $\mathcal{N}(\mathcal{F})$ of $\mathcal{F}$ is the simplicial complex  with vertex set $[m]:=\{1,2\dots , m\}$ whose faces are 
$I \subset [m]$ such that $\cap_{i \in I} F_i \neq \emptyset$.


\sloppy Given a collection of points $S \subset \R^d$ and an $n$-partition into $n$ color classes \allowbreak$\mathcal{P} = S_1, \dots, S_n$ of $S$, 
we define \emph{the nerve of the partition},  $\mathcal{N}(\mathcal{P})$  to be the nerve complex  \allowbreak$\mathcal{N}(\{\conv(S_1), \dots, \conv(S_n)\})$, 
where $\conv(S_i)$ is the convex hull of the elements in the color class $i$. Similarly, given a partition $\mathcal{P}$, we define the 
\emph{intersection graph of the partition}, denoted $\mathcal{N}^1(\mathcal{P})$, as the $1$-skeleton of the nerve of $\mathcal{P}$. 

Given a simplicial complex $K$, and a finite set of points $S$ in $\R^d$, we say that $K$ is \emph{partition induced on $S$} if there exists a partition 
$\mathcal{P}$ of $S$ such that the nerve of the partition is isomorphic to $K$. We say that $K$ is \emph{$d$-partition induced} if there exists at least 
one set of points $S \subset \R^d$ such that $K$ is partition induced on $S$. 
It was shown by G. Y. Perelman \cite{Perelman} that every $d$-dimensional simplicial complex is $(2d+1)$-partition induced on some point set.
This result is in fact optimal, because the barycentric subdivision of the $d$-skeleton of a $(2d + 2)$-dimensional simplex
is not $2d$-partition induced, see \cite{WegnerPhD} and \cite{Tancer2} for details.

Motivated by Tverberg's theorem, we introduce another property of simplicial complexes that is much stronger than being $d$-partition induced 
because it has to hold in all point sets once they have sufficiently many points.

\begin{definition}\label{deftv}
A simplicial complex $K$ is \emph{$d$-Tverberg} if there exists a constant $\Tv(K,d)$ such that $K$ is 
partition induced on all point sets $S \subset \R^d$ in general position with $|S| > \Tv(K,d)$. 
The minimal such constant $\Tv(K,d)$ is called the \emph{Tverberg number for $K$ in dimension $d$}. 
\end{definition}

Let us briefly examine the definition of $d$-Tverberg complexes. First of all, note one can re-state the classical Tverberg's theorem as follows:
 
\begin{theorem*}[Tverberg's theorem rephrased]
The $(m-1)$-simplex is a  $d$-Tverberg complex for all $d\geq 1$, with Tverberg number $(d+1)(m-1)+1$.
\end{theorem*}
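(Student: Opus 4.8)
The plan is to reduce the statement to the classical Tverberg's theorem by first pinning down exactly when a partition induces the full simplex as its nerve. As an abstract simplicial complex on the vertex set $[m]$, the $(m-1)$-simplex contains \emph{every} subset $I \subseteq [m]$ as a face; in particular it contains the top face $[m]$. Hence, for a partition $\mathcal{P} = S_1, \dots, S_m$, the nerve $\mathcal{N}(\mathcal{P})$ is the full $(m-1)$-simplex exactly when $\bigcap_{i=1}^m \conv(S_i) \neq \emptyset$: any point $q$ in this total intersection lies in every sub-intersection $\bigcap_{i \in I} \conv(S_i)$, so all faces $I$ are simultaneously present, while conversely the presence of the face $[m]$ is literally the nonemptiness of the total intersection. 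I would record this equivalence as Step 1, since it converts the nerve condition into the ``common point'' conclusion of Tverberg's theorem.

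With this in hand, the sufficiency direction is immediate. Given any $S \subset \R^d$ in general position with $|S| \ge (d+1)(m-1)+1$, Tverberg's theorem supplies a partition $\mathcal{P} = S_1, \dots, S_m$ whose convex hulls share a common point $q$, and by Step 1 its nerve is the full $(m-1)$-simplex. The one point I would check to guarantee an honest \emph{isomorphism} with the $(m-1)$-simplex (rather than a subcomplex on fewer than $m$ vertices) is that every part is nonempty; but this is automatic, since $q \in \conv(S_i)$ forces $\conv(S_i) \neq \emptyset$ and hence $S_i \neq \emptyset$ for each $i$. Thus the $(m-1)$-simplex is partition induced on every sufficiently large point set, which already shows it is a $d$-Tverberg complex in the sense of Definition \ref{deftv}.

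It remains to identify the Tverberg number precisely, i.e.\ to show the cardinality threshold cannot be lowered. For this I would invoke the classical tightness of Tverberg's theorem: there is a configuration of $(d+1)(m-1)$ points admitting no partition into $m$ parts with intersecting convex hulls, so no smaller count forces the $(m-1)$-simplex as a nerve. Since the geometric content is entirely carried by Tverberg's theorem, there is no real geometric obstacle here; the only genuine care is bookkeeping. Specifically, I expect the fiddly step to be (i) arranging that the extremal configuration can be taken in \emph{general position} as demanded by Definition \ref{deftv}, and (ii) matching the strict-inequality convention ``$|S| > \Tv(K,d)$'' of that definition against the ``at least $(d+1)(m-1)+1$ points'' phrasing of the classical statement, which is where the exact value of the constant is decided. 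Once these are reconciled, the sufficient count from Tverberg's theorem and the matching extremal example together pin the Tverberg number to the stated value $(d+1)(m-1)+1$.
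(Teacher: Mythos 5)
Your proposal is correct and takes essentially the same route as the paper, which offers no separate proof: the statement is treated there as an immediate unfolding of Definition~\ref{deftv} via the observation that the nerve is the full $(m-1)$-simplex precisely when $\bigcap_{i=1}^m \conv(S_i)\neq\emptyset$, combined with classical Tverberg for sufficiency and the classical extremal configurations of $(d+1)(m-1)$ points (perturbable to general position) for minimality. The strict-versus-weak inequality mismatch you flag in point (ii) is a genuine quirk of the paper's own Definition~\ref{deftv}, and the paper consistently resolves it the way you suggest, reading $\Tv(K,d)$ as the least $N$ such that every general-position set with at least $N$ points admits the desired partition.
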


Definition \ref{deftv} can be compared with earlier work by Reay and others \cite{Reay1979, ROUDNEFF2009, Perles2016}, 
who asked what happens when we demand only that each $k$ of the convex hulls intersect. They looked for the smallest number $n$
of points sufficient so that some partition induces a nerve which contains the $(k-1)$-skeleton of a simplex. In fact, 
Reay's conjecture says for every $n \leq (d+1)(m-1)$  there exists an $n$ point set $X \subset \R^d$ such that no partition of $X$ 
induces the complete graph $K_m$ as its intersection graph. In contrast, we are looking for an exact nerve of general kind.

Definition \ref{deftv} is most interesting for sets $S \subset \R^d$ in general position.  The reason is that for collinear 
points the only type of nerve complexes possible are those whose graphs are \emph{interval graphs}. Interval graphs have 
been  classified~\cite{Lekkeikerker} and  in particular are \emph{chordal}. With Definition \ref{deftv} the $4$-cycle graph is 
not $1$-Tverberg, because it is not chordal, but we will show later that it is  $d$-Tverberg for all $d\geq 2$. Similarly, while 
every $d$-Tverberg complex $K$ is clearly $d$-partition induced,  the converse is not true. The complex in Figure~\ref{badexample} 
is a graph that is partition induced on some planar point sets, but not for points in convex position, regardless of how many 
points we use. Thus it is not a $2$-Tverberg complex. Details are presented in Section~\ref{appendix}.

\begin{center}
\begin{figure}[h]~\label{badexample}
\centering \includegraphics[scale=.60]{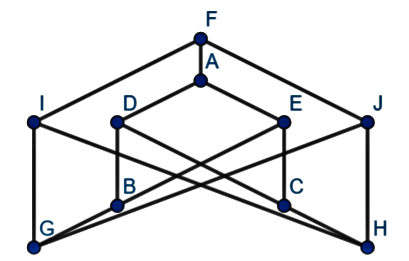}
\caption{A $2$-partition induced one-dimensional complex that is not $2$-Tverberg}
\end{figure}
\end{center}

The key contribution of our paper is to generalize the classical Tverberg's theorem by showing that similar theorems exist where other simplicial 
complexes -not just simplices- are $d$-Tverberg complexes too.  Before stating our first result, recall that the \emph{$k$-hypergraph Ramsey number} 
$R_k(m)$ is the least integer $N$ such that every red-blue $2$-coloring of all $k$-subsets of an $N$-element set contains either a 
red set of size $m$ or a blue set of size $m$, where a set is called red (blue) if all $k$-subsets from this set are red (or respectively blue). 
See \cite{CDFhypergraphramsey} and references therein.

\begin{theorem}\label{tverbertrees+cycles}
All trees and cycles are $d$-Tverberg complexes for all $d \geq 2$.
\quad \vskip 11pt 
\begin{enumerate}
\item[(A)] Every tree $T_n$ on $n$ nodes, is a $d$-Tverberg complex for $d\geq 2$. The Tverberg number $\Tv(T_n,d)$ exists and it is at most $R_{d+1}((d+1)(n-1)+1)$. 
More strongly, $\Tv(T_n,2)$ is at most  ${4n-4 \choose 2n-2 } + 1$.

\item[(B)]  Every $n$-cycle $C_n$ with $n \geq 4$ is a $d$-Tverberg complex for $d\geq 2$. The Tverberg number exists and  
$\Tv(C_n,d)$ is at most  $nd+n+4d$.
\end{enumerate}
\end{theorem}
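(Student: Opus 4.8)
The plan is to prove (A) and (B) by genuinely different routes, which accounts for the gap between the Ramsey bound in (A) and the linear bound in (B). A useful preliminary observation is that for $n\geq 4$ both $T_n$ and $C_n$ are triangle-free graphs, so the complexes to be realized are pure $1$-dimensional and have no $2$-faces; hence it suffices to realize the correct edges and forbid the correct non-edges, since any spurious triple intersection would involve a non-adjacent pair, which the separation arguments below already exclude. For (A) I would first replace an arbitrary general-position set $S$ by a structured subset: color each $(d+1)$-subset of $S$ by the orientation (the sign of the determinant of the frame it spans), which is an honest red-blue $2$-coloring, so once $|S| > R_{d+1}((d+1)(n-1)+1)$ there is a monochromatic $M\subseteq S$ with $|M|=(d+1)(n-1)+1$. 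Such an $M$ has the order type of points on the moment curve, so the intersection pattern of convex hulls of its subsets is combinatorially determined, via Gale's evenness condition and the alternating Radon partitions of a moment-curve configuration. On $M$ I would build $T_n$ by induction on its leaves: begin with one edge on $d+2$ points (a single Radon intersection), and each time a leaf $v$ is attached to a node $u$ spend $d+1$ fresh points of $M$ to create exactly one new intersection between the part of $u$ and the new part of $v$; the running total $d+2+(d+1)(n-2)=(d+1)(n-1)+1$ exhausts $M$, and the evenness description certifies that no unwanted edges appear. The surplus $S\setminus M$ is then absorbed into the parts without creating new adjacencies.

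For $d=2$ this scheme can be sharpened to the bound $\binom{4n-4}{2n-2}+1$. Here I would invoke the Erd\H{o}s--Szekeres theorem to find $2n$ points in convex position, and realize $T_n$ as a chord diagram: assign to each node a single chord on two of these $2n$ points, so that two parts (now segments) meet exactly when the corresponding chords cross. Because every tree is a circle graph, a prescribed $T_n$ is realizable this way, and since $T_n$ is triangle-free there are never three pairwise-crossing chords, so no $2$-face arises.

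For (B) the cyclic, branch-free structure of $C_n$ lets me avoid Ramsey entirely. Order $S$ by a generic linear functional $\ell$ and carve out $n$ short consecutive gadgets $G_1,\dots,G_n$ in well-separated $\ell$-intervals; inside each take a Radon partition $G_i=A_i\sqcup B_i$ and splice tails to heads by setting $S_i=B_{i-1}\cup A_i$ with indices mod $n$. Then $\conv(S_i)\cap\conv(S_{i+1})\supseteq\conv(A_i)\cap\conv(B_i)\neq\emptyset$ produces each cyclic edge, while for non-adjacent $i,j$ the two parts lie in disjoint runs of intervals and are separated by a level set of $\ell$, hence disjoint (and triangle-freeness forbids any triple). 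Sharing the splice point between consecutive gadgets brings the count to $nd+n$, and the remaining $4d$ points are reserved for the closing edge $\{n,1\}$.

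In both parts the crux is to force the nerve to equal the prescribed complex \emph{exactly} while partitioning \emph{all} of $S$: every unwanted edge must be excluded and, at the same time, the possibly numerous surplus points must be absorbed without creating new adjacencies. For (A) this is the careful bookkeeping of evenness on $M$ (and, for $d=2$, the explicit chord diagram realizing a given tree as a circle graph). For (B) the genuinely delicate point is closing the cycle: the edge $\{n,1\}$ joins the two $\ell$-extremes, so the naive splice $B_n\cup A_1$ yields a part whose hull spans the whole configuration and would cross the middle parts; routing this last intersection without creating a diagonal is exactly what the extra $4d$ points must accomplish, and I expect this wrap-around, together with the verification that it adds no spurious face, to be the hardest step.
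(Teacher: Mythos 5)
Your outline for part (A) is essentially the paper's own route: the hypergraph Ramsey number applied to the orientation $2$-coloring of $(d+1)$-subsets to extract an alternating (cyclic-polytope) configuration $M$, then a leaf-by-leaf induction spending $d+1$ fresh consecutive points and one Radon partition per leaf (the paper's Theorem~\ref{treescyclicD}); in the plane, Erd\H{o}s--Szekeres plus a chord-diagram realization of $T_n$ on $2n$ points in convex position, which is exactly what the paper's Theorem~\ref{treeconvex} builds by induction rather than by quoting that trees are circle graphs. The genuine gap is your single sentence ``the surplus $S\setminus M$ is then absorbed into the parts without creating new adjacencies.'' The theorem requires a partition of \emph{all} of $S$, and this absorption step is not bookkeeping: the paper exhibits a partition (Figure~\ref{fignonextendable}) whose nerve is destroyed by \emph{every} extension to a larger point set, so extendability is a property of the particular partition constructed, not of partitions with the right nerve in general. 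The paper proves it (Lemmas~\ref{extension2d} and~\ref{extension}) by exploiting the inductive structure of its construction: for each leaf color there is a line, respectively a hyperplane, that isolates the new class together with exactly one point of its parent class, and surplus points are colored by induction on the leaf-addition order using these separators. Your $d=2$ route is worse off here: an \emph{arbitrary} circle-graph realization of $T_n$ carries no such inductive structure, so for it extendability is not even clearly true and would need its own argument.

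Part (B) has the more serious gap, and you flag it yourself: the linear-order splicing produces the path on $S_1,\dots,S_n$ but cannot close the cycle, and ``routing this last intersection'' is the actual content of the theorem, not a deferrable detail. The obstruction is real, not just hard: if the points of $\bar S$ lie in convex position, each middle class spans the full extent of the configuration transverse to $\ell$ within its $\ell$-interval, so any class whose hull meets both $\conv(S_1)$ and $\conv(S_n)$ contains a segment crossing those middle slabs and creates forbidden edges; no reserve of $4d$ points repairs the naive wrap-around. The paper avoids a linear order altogether. It projects $\bar S$ to a generic $2$-plane and uses the Ham--Sandwich theorem (Lemma~\ref{pizza}) to produce a ``pizza'': $n$ angular sectors around a common center, each containing the projections of at least $d+1$ points, with any two adjacent sectors spanning at most $\pi$. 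Radon partitions, taken upstairs in $\R^d$ on points whose projections lie in a common sector, are then spliced sector by sector exactly as in your scheme; but because the sectors are arranged cyclically, the closing edge $\{n,1\}$ arises the same way as every other edge, while non-adjacent classes are separated by a line (hence by a hyperplane in $\R^d$) precisely because adjacent sectors span at most $\pi$. Without this idea---making the geometry itself cyclic rather than linear---your proof of (B) does not go through.
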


The proof of Theorem \ref{tverbertrees+cycles} relies on several powerful non-constructive tools such as the
Ham-Sandwich theorem (see Section 1.3 \cite{Mbook}), a characterization of oriented matroids of cyclic polytopes \cite{CordovilDuchet}, 
and the multi-dimensional version of Erd\"os-Szekeres theorem (this is due to Gr\"unbaum \cite{Gbook} and 
Cordovil and Duchet \cite{CordovilDuchet},  see also Chapter 9 of \cite{OMbook}, and the survey \cite{Morris}). 
These tools are enough to show the existence of a Tverberg number $\Tv(T_n,d)$, but the bounds are  far from tight. 
Details are presented in Section \ref{tverbertreesANDcycles}.

We can  prove the following general  lower bound for the Tverberg numbers (see Appendix for the argument).

\begin{lemma}\label{lowerb}  For any connected simplicial complex $K$ with $n \geq 2$ vertices, if it exists, then  $\Tv(K,d) \geq 2n$.  \end{lemma}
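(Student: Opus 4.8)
The plan is to exhibit, for each dimension $d \geq 2$, a single point configuration of size $2n-1$ in general position on which \emph{no} partition can induce $K$; since the Tverberg number is recorded as one more than the largest failing size (exactly as for the $(m-1)$-simplex, which fails on some $(d+1)(m-1)$ points yet has $\Tv=(d+1)(m-1)+1$), producing a failing set of size $2n-1$ forces $\Tv(K,d) \geq 2n$ whenever this number exists.

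First I would take $S$ to be $2n-1$ points on the moment curve in $\R^d$, equivalently the vertices of a cyclic polytope, so that $S$ is simultaneously in general position (every $d+1$ of its points are affinely independent) and in convex position (each point is a vertex of $\conv(S)$). Suppose for contradiction that some partition $\mathcal{P}$ of $S$ satisfies $\mathcal{N}(\mathcal{P}) \cong K$. Since the vertices of a nerve are exactly its nonempty color classes and $K$ has precisely $n$ vertices, the isomorphism forces $\mathcal{P}$ to consist of exactly $n$ nonempty parts $S_1,\dots,S_n$.

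The combinatorial heart is a pigeonhole count: distributing $2n-1$ points among $n$ nonempty parts forces at least one part, say $S_i$, to be a singleton $\{p\}$, as otherwise $|S|\geq 2n$. The geometric heart is that in convex position a singleton is isolated in the nerve: because $p$ is a vertex of $\conv(S)$ while $p\notin S_j$ for $j\neq i$, the extreme-point property gives $p\notin \conv(S\setminus\{p\})\supseteq \conv(S_j)$, so $\conv(S_i)\cap\conv(S_j)=\{p\}\cap\conv(S_j)=\emptyset$ for every $j\neq i$. Thus the vertex $i$ is isolated in $\mathcal{N}(\mathcal{P})$, contradicting that $K$ is connected on $n\geq 2$ vertices and hence has no isolated vertex. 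This rules out every partition and establishes that $S$ induces no copy of $K$.

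I expect the genuinely delicate points to be two, both minor. The first is bookkeeping: confirming that ``$\mathcal{N}(\mathcal{P})\cong K$'' really forces exactly $n$ nonempty parts, so that the pigeonhole step bites. The second, and the only real obstacle, is the dimension $d=1$, which is degenerate: convex position cannot hold for more than two collinear points, and an interior point can lie inside an interval, so the extreme-point argument fails outright. There, however, the only realizable nerves are interval graphs, so for most connected $K$ the number $\Tv(K,1)$ does not exist and the statement is vacuous; I would dispatch $d=1$ as a short remark invoking the interval-graph structure. With the $(2n-1)$-point failing configuration established for all $d\geq 2$, the convention that a failing size $t$ yields $\Tv(K,d)\geq t+1$ gives $\Tv(K,d)\geq 2n$, completing the argument.
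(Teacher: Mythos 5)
Your argument for $d\geq 2$ is correct and is essentially the paper's own proof: take a set in convex position, use pigeonhole on $2n-1$ points in $n$ nonempty classes to force a singleton, observe that an extreme point of the configuration cannot lie in the convex hull of any other class, and contradict connectivity of $K$. The only differences are presentational and in your favor: the paper argues by direct contradiction on a set of size exactly $\Tv(K,d)$ (silently using the reading that every set of size $\geq \Tv(K,d)$ must work, which is the convention you also adopt and justify via the rephrased Tverberg theorem), and the paper says only ``convex position,'' whereas your moment-curve choice guarantees convex and general position simultaneously, which the definition actually requires.

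One caveat: your plan to dispatch $d=1$ by declaring the statement vacuous does not work. By the paper's Theorem \ref{improvedtrees} and the Eckhoff remark following it, caterpillar trees (in particular paths and stars) \emph{are} $1$-Tverberg, with $\Tv(T_n,1)\leq (1+1)(n-1)+1 = 2n-1$; even for $K_2$ one has $\Tv(K_2,1)=3<4$. So for these connected complexes $\Tv(K,1)$ exists yet is smaller than $2n$, i.e., the lemma as literally stated is false at $d=1$ and cannot be ``dispatched'' there by any argument. This is not a defect of your proof relative to the paper: the paper's proof has the identical blind spot (more than two collinear points are never in convex position, so its chosen $S$ does not exist when $d=1$), and the lemma should simply be read with the implicit hypothesis $d\geq 2$, under which your proof is complete.
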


In addition to this general lower bound, we show that the upper bounds of Theorem \ref{tverbertrees+cycles} can indeed be 
improved by giving better bounds on the Tverberg numbers of \emph{caterpillar trees}. Caterpillar trees are 
those in which all  the vertices are within distance one of a central path; these include paths and stars. See Section \ref{specialtrees}.

\begin{theorem} \label{improvedtrees}
If a tree $T_n$ is a caterpillar tree with $n$ nodes, then $T_n$ is $d$-Tverberg complex for all $d$, and its 
$d$-Tverberg number $\Tv(T_n,d)$ is no more than $(d+1)(n-1)+1$.
\end{theorem}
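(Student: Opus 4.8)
The plan is to induct on the number of vertices $n$, building the caterpillar one part at a time along its spine and attaching its legs, while maintaining the invariant that the part we are about to grow from is \emph{extreme} with respect to a fixed generic direction. Since a tree is triangle-free, no face of dimension $\geq 2$ can ever appear in $\N(\mathcal{P})$ (a $2$-face would force a triangle in the $1$-skeleton), so it suffices to control the intersection graph $\N^1(\mathcal{P})$: I must force the $n-1$ wanted pairwise intersections and forbid all the others. First I would fix a generic unit vector $u$ and order $S$ by the values $\langle u,\cdot\rangle$. Throughout, two parts whose $\langle u,\cdot\rangle$-ranges are disjoint are automatically separated by a hyperplane orthogonal to $u$, so their convex hulls do not meet. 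This reduces the ``non-edge'' bookkeeping to keeping non-adjacent parts in disjoint $u$-slabs, which is exactly an interval-type layout of the caterpillar (consistent with caterpillars being interval graphs), and it leaves the genuine geometric content entirely on the ``edge'' side.

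Next I would set up the inductive step. Writing $N=(d+1)(n-1)+1$, note that $N-(d+1)=(d+1)(n-2)+1$ is the budget for a caterpillar on $n-1$ vertices, so each new vertex should cost exactly $d+1$ points. I would order the vertex additions so as to sweep the spine $v_1,\dots,v_k$ from left to right and, while $v_j$ is the current extreme part, attach all legs incident to $v_j$ before moving on to $v_{j+1}$; in this order the part $S_w$ to which a new part must be joined is always extreme in the direction $u$. To create the single new edge I would take the $d+1$ fresh (extreme) points together with the interface of $S_w$ and apply Radon's theorem (the $m=2$ case of Tverberg) to a $(d+2)$-point window straddling the boundary of $S_w$, assigning the crossing Radon pair so that the new part $S_v$ genuinely penetrates $\conv(S_w)$. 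The extremality invariant guarantees that $S_w$ shields every earlier part, so $\conv(S_v)$ meets $\conv(S_w)$ and nothing else. Legs are handled the same way, all hung on the common extreme part, which is why a star is realizable: a single wide central part can be crossed by many pairwise-separated leaf parts sitting in disjoint $u$-slabs.

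The step I expect to be the main obstacle is reconciling two competing demands at once: forcing $\conv(S_v)\cap\conv(S_w)\neq\emptyset$ while certifying $\conv(S_v)\cap\conv(S_i)=\emptyset$ for every non-neighbor $i$, and doing so within the exact budget of $d+1$ points per vertex. The difficulty is that for an arbitrary point set in general position the Radon partition of a $(d+2)$-point window is forced upon us, and its two sides may be badly unbalanced, so I cannot simply prescribe which points cross. Overcoming this is where general position and, if needed, Tverberg's theorem itself (invoked on the window rather than on all of $S$) do the real work, since a careful reuse of the shared interface points across consecutive crossings is what pins the constant down to $(d+1)(n-1)+1$ rather than the weaker $(d+2)(n-1)$ that a naive chain of independent Radon crossings would give. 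Finally I would verify the base cases $n\leq 2$ and the degenerate dimension $d=1$ directly from the interval layout, completing the induction.
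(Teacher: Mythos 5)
Your proposal is correct and follows essentially the same route as the paper: the paper likewise orders the points along a fixed generic direction (the first coordinate, after rotating axes), sweeps the spine attaching all legs of a spine vertex before moving to the next (formally, a star lemma plus induction on the central-path length), creates each new part via a Radon partition of one reused point of the current extreme part together with $d+1$ fresh points, and certifies all non-edges by slab separation, with your extremality invariant appearing verbatim as part (2) of its induction hypothesis. The ``main obstacle'' you flag is in fact a non-issue, and resolved exactly by the mechanism you already describe: whichever Radon partition the $(d+2)$-point window forces, the side not containing the interface point consists only of fresh points and is declared the new part, while the remainder of the interface side is merged into the parent part, so no control over the balance of the Radon partition (and no appeal to Tverberg's theorem on the window) is ever needed, and the budget $1+(n-1)(d+1)$ falls out automatically.
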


In terms of intersection properties
caterpillar graphs have been shown to be precisely the trees that are also interval graphs by Eckhoff \cite{EckhoffIntervalGraph}. 
In other words, the previous theorem implies that a tree $T_n$ is also $1$-Tverberg if and only if $T_n$ is a caterpillar tree.

Furthermore, in dimension two we can give some exact Tverberg numbers for trees:

\begin{theorem} \label{lowdim} 
\quad \vskip 10pt

\begin{enumerate}
\item[(A)]The $2$-Tverberg numbers $\Tv(S_n,2)$ for a star tree with $n$ nodes  equals $2n$.

\item[(B)] The $2$-Tverberg numbers of the path and cycle with four nodes are  $\Tv(P_4,2)=9$ and 
$11 \leq \Tv(C_4,2) \leq 13$.
\end{enumerate}
\end{theorem}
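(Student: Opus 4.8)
The plan is to treat the lower and upper bounds separately for each graph: I would use the general lower bound of Lemma~\ref{lowerb} as a starting point and sharpen it by explicit planar configurations, while improving the generic upper bounds of Theorems~\ref{tverbertrees+cycles} and~\ref{improvedtrees} by hands-on planar constructions.

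For part (A), the lower bound $\Tv(S_n,2)\geq 2n$ is immediate from Lemma~\ref{lowerb}, since the star $S_n$ is connected with $n$ vertices, so the content is the matching upper bound: every set $S$ of $2n+1$ points in general position induces $S_n$. I would build the center class as a chord $\overline{ab}$ and the $n-1$ leaves as pairs of points whose segments each cross $\overline{ab}$ while remaining pairwise non-crossing. The key device is to choose $a,b$ so that the relative interior of $\overline{ab}$ equals $(\text{line }ab)\cap\conv(S)$; then any segment joining a point on one side of the line to a point on the other side --- both inside $\conv(S)$ --- is forced to cross the \emph{segment} $\overline{ab}$, not merely the line. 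Taking $a,b$ to be two convex-hull vertices whose chord is a halving line of the remaining $2n-1$ points splits them $(n-1)$ versus $n$; matching the minority side to the majority side in the order induced along $ab$ produces $n-1$ pairwise non-crossing leaf segments each meeting $\overline{ab}$, and the single leftover point is absorbed into the center harmlessly, since enlarging the center can create neither a leaf--leaf edge nor a triple intersection (any triple would force two leaves to meet). The one case this misses is a hull too thin to admit a balanced chord --- few hull vertices and many deep points --- which I would handle by the dual construction using a large center triangle with the leaves nested inside it. The \textbf{main obstacle} here is showing that these two regimes are exhaustive and that, in the generic regime, a balanced halving chord through two hull vertices always exists.

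For part (B), both statements amount to improving the generic bounds by planar arguments, and I expect the exact value $\Tv(P_4,2)=9$ to be the delicate point because both bounds must pinch. For the upper bounds I would take a point set of the claimed threshold size, apply the Ham-Sandwich theorem to split it into balanced halves, and assemble the nerve from Radon-type crossing pairs: for $P_4=v_1v_2v_3v_4$, a central crossing pair realizes the edge $v_2v_3$ and the pendant edges $v_1v_2,\ v_3v_4$ are attached in the two halves, with a careful count shaving one point off the caterpillar bound $(d+1)(n-1)+1=10$ of Theorem~\ref{improvedtrees}; for $C_4$ I would realize the four edges as a windmill of four segments whose crossing graph is the $4$-cycle, absorbing the extra points into the classes without creating the forbidden diagonals $v_1v_3,\ v_2v_4$ or any triangle, improving the generic cycle bound $nd+n+4d=20$ of Theorem~\ref{tverbertrees+cycles} to $13$. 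For the lower bounds I would exhibit explicit small configurations --- most naturally points in convex position, for which two classes' hulls meet exactly when their point sets interleave cyclically --- and rule out the target nerve by a finite analysis over all admissible $4$-colorings: a $9$-point configuration admitting no $P_4$, and an $11$-point configuration admitting no $C_4$.

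The principal difficulty throughout is that the upper bounds must hold for \emph{every} order type of the threshold size, not merely for points in convex position, so each construction requires a case analysis on the convex-position structure and on how many points lie deep inside the hull, while the lower bounds require explicit configurations together with a rigorous, typically exhaustive, verification that no partition yields the desired interleaving pattern. The remaining gap $11\leq\Tv(C_4,2)\leq 13$ reflects that for the $4$-cycle these two sides are not yet made to meet.
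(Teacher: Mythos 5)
For part (A), your plan is essentially the paper's own proof: the paper also fixes a hull vertex $\p_1$, sweeps the chord $L_{\p_1\p_i}$ over the other hull vertices, and either some position balances the remaining points (handled by the non-crossing pairing of Lemma~\ref{goodline}, which is your pairing device) or the count jumps past $n-1$, in which case the triangle $\{\p_1,\p_i,\p_{i+1}\}$ becomes the center class with the interior points as singleton leaves. The ``exhaustiveness of the two regimes'' you flag as the main obstacle is exactly what this discrete sweep settles: $|B_i|$ decreases from $2n-2$ to $0$, so it either hits $n-1$ or skips over it.

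For part (B) there are genuine gaps. First, the paper's proof that $\Tv(P_4,2)\le 9$ is not a construction at all: it is an exhaustive computer verification, justified by Lemma~\ref{ordertype} and using the Aichholzer order-type database, that every order type on nine points admits a partition inducing $P_4$ (with Theorem~\ref{improvedtrees} covering ten or more points). Your proposed Ham-Sandwich-plus-Radon construction ``shaving one point off the caterpillar bound'' comes with no argument, and the difficulty is real: there is an eight-point configuration (Figure~\ref{no4path}) on which $P_4$ cannot be induced, so any uniform construction of the kind you describe must succeed on nine points yet provably break on eight; nothing in your sketch accounts for this threshold, and the authors evidently found no such hands-on argument either. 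Second, your lower-bound plan looks for witnesses where none can exist: you propose configurations ``most naturally in convex position,'' but convex-position sets of these sizes always induce both graphs. Labelling eight points cyclically, the parts $\{1,3\},\{2,5\},\{4,7\},\{6,8\}$ induce $P_4$ and the parts $\{1,4\},\{3,6\},\{5,8\},\{2,7\}$ induce $C_4$ (chords on a circle cross exactly when their endpoint pairs interleave, which is your own criterion), and extra points can be absorbed into a suitable class. The failing configurations are necessarily non-convex --- the paper's examples have several deep interior points --- and were found by exhaustive search over order types, not by hand. Third, there is a size discrepancy: the paper obtains $\Tv(P_4,2)\ge 9$ and $\Tv(C_4,2)\ge 11$ from failing configurations of \emph{eight} and \emph{ten} points respectively (it reads the Tverberg number as the least $N$ such that every set of at least $N$ points works), whereas you call for nine- and eleven-point failing configurations; for $P_4$ a nine-point failing configuration does not exist, precisely by the enumeration above, so that target is unattainable. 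Only your last step --- obtaining $\Tv(C_4,2)\le 13$ by rerunning the cycle construction of Theorem~\ref{tverbertrees+cycles}(B) on $13$ points --- matches what the paper does.
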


The proof of Theorem \ref{lowdim}  (B) requires exhaustive computer enumeration of all possible partitions, 
over all possible order types of point sets
with fewer than ten points. Luckily, these order types were classified in~\cite{Aichholzer}. 

Recall that for an ordered set of points $S = (\p_1, \p_2, \dots, \p_n)$ the \emph{order type} (see 9.3 \cite{Mbook}) of $S$ is defined  as the 
mapping assigning to each $(d+1)$-tuple $(i_1, i_2, \dots, i_{d+1})$ of indices, $1 < i_1 < i_2 < \dots <i_{d+1} \leq n$, the orientation of 
the $(d+1)$-tuple $(\p_{i_1}, \p_{i_2}, \dots , \p_{i_{d+1}})$ (i.e., the sign of the determinant of the corresponding matrix). The order type of $S$ 
is encoded by the \emph{chirotope} of $S$ which is the sequence of resulting  $\binom{n}{d+1}$ signs of possible determinants. This is a
vector of $+1$'s and $-1$'s, with $\binom{n}{d+1}$ entries.

The proof of Theorem \ref{lowdim}  (B)  also uses the following lemma to ensure that it suffices to check one representative configuration of points from each order type,
reducing calculations to finitely many cases. See details in the Appendix.

\begin{lemma}~\label{ordertype}
Suppose $S_1$ and $S_2$ are two point sets in $\R^d$ with the same order type, and let $\sigma$ be a bijection from $S_1$ to $S_2$ that 
preserves the orientation of any $(d+1)$-tuple in $S_1$. Then any partition $\mathcal{P} = (P_1, P_2, \dots, P_n)$ of $S_1$ and the 
corresponding partition of $S_2$ via $\sigma$, denoted $\sigma{\mathcal{P}} = \{\sigma(P_1), \sigma(P_2), \dots ,\sigma(P_n)\}$,  have 
the same intersection graph $\N^1(\mathcal{P})$.
\end{lemma}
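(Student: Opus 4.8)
The plan is to reduce the equality of the two intersection graphs to a pairwise statement and then show that the pairwise intersection of convex hulls is an invariant of the order type. Since $\N^1(\mathcal{P})$ is by definition the $1$-skeleton of the nerve, its edge set is exactly $\{\{i,j\} : \conv(P_i)\cap\conv(P_j)\neq\emptyset\}$, so it suffices to prove that for every pair of blocks, $\conv(P_i)\cap\conv(P_j)\neq\emptyset$ if and only if $\conv(\sigma(P_i))\cap\conv(\sigma(P_j))\neq\emptyset$. First I would homogenize, replacing each $\p\in\R^d$ by $\hat{\p}=(\p,1)\in\R^{d+1}$. A routine computation shows that $\conv(P_i)\cap\conv(P_j)\neq\emptyset$ exactly when the system $\sum_{\p\in P_i}\lambda_{\p}\hat{\p}=\sum_{\q\in P_j}\mu_{\q}\hat{\q}$ has a solution with all $\lambda_{\p},\mu_{\q}\geq 0$ and not all zero (the last coordinate of $\hat{\p}$ then forces the two mass-sums to be equal and positive, so rescaling produces genuine convex combinations). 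Thus the edge $\{i,j\}$ is present precisely when there is a nontrivial signed linear dependence among the lifted points, supported on $P_i\cup P_j$, with sign pattern nonnegative on $P_i$ and nonpositive on $P_j$.

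The crux is that the existence of such a dependence depends only on the order type. The order type of $S_1$ records the signs of the determinants $\det(\hat{\p}_{i_0},\dots,\hat{\p}_{i_d})$, and these are exactly the entries of the chirotope of the homogenized configuration; by hypothesis $\sigma$ preserves all of them, so the lifted point sets of $S_1$ and $S_2$ realize the same oriented matroid. I would then invoke the standard fact that the chirotope determines the oriented matroid, and in particular its set of \emph{vectors}, i.e.\ the achievable sign patterns of linear dependences among the $\hat{\p}$. Feasibility of the sign-constrained system above is precisely the existence of a vector of this oriented matroid that is supported on $P_i\cup P_j$, nonnegative on $P_i$, nonpositive on $P_j$, and nonzero — a condition read off from the oriented matroid alone. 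Hence the system is feasible for $S_1$ if and only if it is feasible for $S_2$, giving equality of the two edge sets and therefore $\N^1(\mathcal{P})=\N^1(\sigma\mathcal{P})$.

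If one prefers to avoid oriented-matroid language, the same conclusion follows by elementary linear programming. By the separating-hyperplane theorem, $\conv(P_i)$ and $\conv(P_j)$ are disjoint if and only if some affine functional is strictly positive on all of $P_i$ and strictly negative on all of $P_j$; by Farkas' lemma this is a linear feasibility problem, and the feasibility of such a homogeneous sign-constrained system is governed only by the signs of the maximal minors of the coefficient matrix formed by the lifted points. Since those minors are, up to the fixed homogenizing row, the order-type determinants, their signs — and hence feasibility — are unchanged by $\sigma$.

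The main obstacle is this middle step: justifying that the intersection (equivalently, separation) condition is genuinely a function of the order type rather than of the specific coordinates. The reduction to pairs and the final appeal to order-type preservation are routine; the real work is the homogenization reformulation together with a correct appeal to the principle that the chirotope determines the vectors of the oriented matroid (or, equivalently, to Farkas' lemma plus the minor-sign characterization of linear feasibility). A minor point worth checking is that strict separation in the disjoint case descends to the finitely many points, which holds because convex hulls of finite point sets are compact.
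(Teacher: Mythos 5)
Your proposal is correct and follows essentially the same route as the paper: reduce the claim to pairwise statements $\conv(P_i)\cap\conv(P_j)\neq\emptyset \iff \conv(\sigma(P_i))\cap\conv(\sigma(P_j))\neq\emptyset$, and then observe that this condition is an invariant of the oriented matroid, which is determined by the chirotope (order type) that $\sigma$ preserves. The only difference is cryptomorphic bookkeeping --- the paper certifies the intersection by an inclusion-minimal Radon partition, i.e.\ a \emph{circuit} of the oriented matroid, while you certify it by a sign-constrained linear dependence of the homogenized points, i.e.\ a \emph{vector}; these are equivalent formulations of the same key fact.
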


Lemma~\ref{ordertype} cannot be extended to arbitrary nerve complexes as we see in the example of Figure \ref{tommy}.
Despite the fact that the chirotope-preserving bijections do not preserve the higher-dimensional skeleton of the nerve of a partition 
we can still make use of Lemma~\ref{ordertype} throughout our paper because our results are only about 
\emph{triangle-free} simplicial complexes, thus their nerve complexes equal their $1$-skeleton. 

\begin{center}
\begin{figure}[h]~\label{tommy}
\centering \includegraphics[scale=.50]{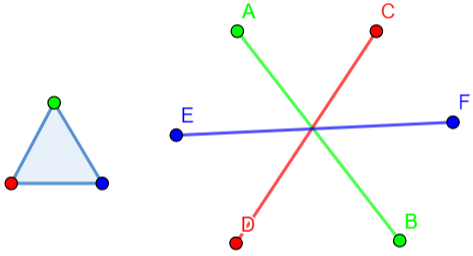} \hskip .5cm  \includegraphics[scale=.50]{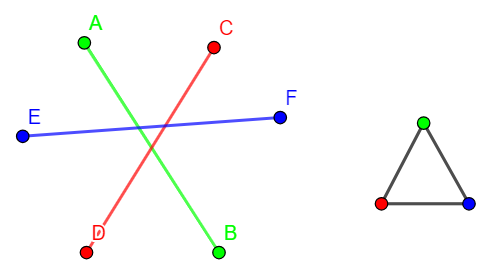}
\caption{Only the $1$-skeleton of the nerve is preserved by order-preserving bijection.}
\end{figure}
\end{center}


\section{A Tverberg theorem for Trees and Cycles}~\label{tverbertreesANDcycles}

\subsection{Proof of Theorem~\ref{tverbertrees+cycles} (A) in the plane}

Because the case of dimension two exemplifies the key ideas very well and because we can provide a 
better bound, we first give the proof of Theorem \ref{tverbertrees+cycles} (A) in the plane. 
To summarize the proof, first, we show in Theorem \ref{treeconvex} that the result holds if the points are arranged as the vertices of a convex polygon.
Second, given any set $\bar{S}$ with at least  ${4n-4 \choose 2n-2 } + 1$ points in the plane, we apply the Erd\H os-Szekeres theorem to deduce that $\bar{S}$ has a sub-configuration $S$ of $2n$ points in convex position. Then we apply Theorem~\ref{treeconvex} to obtain a partition of $S$ whose nerve is the tree $T_n$, and finally, in Lemma~\ref{extension2d}, we prove we can extend the partition of $S$ to the rest of $\bar{S}$ while preserving the nerve. 
Later in Subsection \ref{treesind} we  present the general case in $\R^d$ following a similar strategy, but some of the key steps are different.

\smallskip

\begin{theorem}\label{treeconvex}
Let $T_n$ be a tree with $n$ nodes, and let $S \subset \R^2$ be any $2n$ point set in convex position. 
Then $S$ admits a partition $\mathcal{P}$ such that its nerve $\N(\mathcal{P})$ is isomorphic to $T_n$.
\end{theorem}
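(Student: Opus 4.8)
The plan is to realize $T_n$ using the partition in which every color class consists of exactly two of the $2n$ points, so that each $\conv(S_i)$ is a chord of the convex polygon spanned by $S$; since $n$ classes of size two use all $2n$ points, this is a genuine partition. I would first record the combinatorial mechanism governing when two such chords meet: writing the points as $\p_1,\dots,\p_{2n}$ in convex (cyclic) position, two chords with distinct endpoints intersect if and only if their endpoints interleave along the boundary, i.e.\ reading cyclically one sees the pattern $i\,j\,i\,j$. Thus $\N(\mathcal{P})$ is exactly the chord-intersection graph of the chosen pairing. I note at the outset that once this graph is forced to be $T_n$, which is triangle-free, no three chords can pairwise cross, hence no three chords are concurrent; consequently the nerve carries no $2$-faces and equals its $1$-skeleton, so it suffices to control pairwise intersections.

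Because the interleaving condition depends only on the cyclic order of the endpoints, and any $2n$ points in convex position share the same cyclic order type, the problem reduces to a purely combinatorial one: exhibit a perfect matching of $2n$ cyclically ordered marks into $n$ chords whose crossing graph is $T_n$. Any such abstract chord diagram can then be transported verbatim to the given set $S$ by labeling $\p_1,\dots,\p_{2n}$ in cyclic order, after which the interleaving criterion guarantees that the resulting partition has nerve $T_n$.

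I would build the required matching by induction on $n$. The base case $n=1$ is a single chord with empty edge set. For the inductive step, choose a leaf $v$ of $T_n$ with unique neighbor $u$, and apply the inductive hypothesis to $T_{n-1}=T_n-v$ to obtain a chord diagram on $2n-2$ marks realizing $T_{n-1}$. Letting the chord labeled $u$ have an endpoint at some mark $a$, I insert the two endpoints of a new chord $v$ immediately on either side of $a$, so that locally the cyclic word reads $v\,u\,v$. This small chord encloses only $a$ on its short arc, so it interleaves with the chord $u$ (whose other endpoint lies on the long arc) and with no other chord, while all previously present crossings are untouched because the insertion does not disturb the relative cyclic order of the old marks. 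Hence the new crossing graph is exactly $T_{n-1}$ together with the single edge $uv$, namely $T_n$.

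The main obstacle is verifying cleanly that the leaf-insertion creates precisely one new edge while preserving the rest of the intersection pattern; this is where the device of a small chord straddling a single endpoint does the real work, together with the bookkeeping that rules out accidental triple intersections. Both points follow from the interleaving criterion: a nonempty triple intersection of three chords would force them to pairwise cross and thus produce a triangle, which a tree cannot contain, while the locality of the insertion ensures that the only endpoint trapped on the short arc is $a$, so exactly the edge $uv$ appears. I would finish by transporting the abstract diagram to $S$ and invoking the interleaving criterion once more to conclude $\N(\mathcal{P})\cong T_n$.
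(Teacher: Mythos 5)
Your proposal is correct and follows essentially the same route as the paper: both realize $T_n$ as a perfect matching of the $2n$ points into chords (every color class a pair) and induct on leaf additions, inserting a short chord that straddles a single endpoint of the parent's chord so that exactly one new crossing appears. The only substantive difference is that by inserting the new endpoints adjacent to an endpoint of $u$ wherever it happens to lie, you sidestep the paper's Case 2, in which the coloring must be cyclically rotated (justified via Lemma~\ref{ordertype}) to bring a point of the parent's color to the fixed insertion site.
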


\begin{proof}[\bf Proof]
The proof is by induction on $n$, the number of vertices in $T_n$. For an example of the 
construction see Figure \ref{example1}.

For $n = 1$, the tree consists of a single node and $S$ is a set of two points in $\R^2$. Coloring both points with color $1$ will trivially satisfy the theorem. When $n=2$, the only tree with two vertices is $K_2$. By Radon's theorem any set of four points in $S$, say $\boldsymbol{s}_1,\boldsymbol{s}_2,\boldsymbol{s}_3,\boldsymbol{s}_4$ in counterclockwise order, can be partitioned with intersection graph $K_2$. Note that in this case, coloring the points in $S=S_1\cup S_2$ with two alternating colors $\boldsymbol{s}_1=1,\bs_2=2, \bs_3=1, \bs_4=2$ will yield the required partition.

For performing the induction step, we can assume $T_n$ was obtained from a tree $T_{n-1}$ by adding the leaf node $v_n$ to a node $v_{r} \in T_{n-1}$ such that $\{v_n,v_r\}$ is an edge of $T_n$. Note that in our labeling of the $n$ nodes, $r$ may not be $n-1$, but all trees are constructed by a sequence of leaf additions.

By the induction hypothesis, for any set $S'$ with exactly $2n-2$ points in convex position in $\R^2$,  there exists a partition $\mathcal{P}'$ of $S'$ into $n-1$ color classes, where each color $i\in \{1,2,\dots n-1\}$ is used twice, such that
$T_{n-1}=\mathcal{N}(\mathcal{P}')$. Thus, we may assume that there exists a two-to-one ``coloring function" $\mathcal{C}:S' \to [n-1]$ that associates two points in $S'$ with a color $i$, (the color of node $v_i$). 

Let $S$ be a set of $2n$ points in convex position in $\R^2$, ordered in a clockwise manner, say $S=\{ \bs_1,\bs_2,\dots ,\bs_{2n}\}$, and assume without loss of generality that $\bs_1$ is at twelve o' clock. Next, consider the set $S':=S\setminus \{\bs_2,\bs_{2n}\}$. To this set $S'$ we can apply the
induction hypothesis, it is properly colored and gives $T_{n-1}$. Now we show how to add color $n$ to the remaining points in $S$ to give $T_n$. There are two cases.


\begin{itemize}
\item[Case 1]\label{Case1} If $\mathcal{C}(\bs_1)=r$, then extend $\mathcal{P}'$ to a partition $\mathcal{P}$ of $S$ by 
assigning color $n$ to the points $\bs_2$ and $\bs_{2n}$. Thus $\mathcal{P}=\mathcal{P}' \cup \{\bs_2,\bs_{2n}\}$. 
Let $L_n$ be the line through $\bs_2$ and $\bs_{2n}$.
Observe that on one side of $L_n$, say $L_n^+$, there is only $\bs_1$. Then the other points in $S'$ are contained in the other open half plane $L_n^-$. In particular, one point, say $\bs_j$, is such that $\mathcal{C}(\bs_j)=r$. Thus $\bs_1$ and $\bs_j$ have color $r$. Then $\conv(\bs_2,\bs_{2n})$ and $\conv(\bs_1,\bs_{j})$ intersect so $\mathcal{N}(\mathcal{P})$ contains the edge $(r,n)$. Furthermore, for any $i \neq n,r$, we have that $\mathcal{N}(\mathcal{P})$ does not contain the edge $(i,n)$, since the points with color $i$ are contained in $L_n^-$ and so their convex hull cannot intersect $\conv(\bs_2,\bs_{2n})$. Thus the nerve of $\mathcal{P}$ is $T_n$.
\smallskip

Before starting Case 2 consider the relabeling of  $S':=S\setminus \{\bs_2,\bs_{2n}\}= \{ \x_1=\bs_1,\x_2=\bs_3,\dots ,\x_{2n-2}=\bs_{2n-1}\}$.

\item[Case 2]\label{Case2}  
If $\mathcal{C}(\bs_1)\not=r$, then we know that on one side of the line $L_n$ (through $\bs_2$ and $\bs_{2n}$) there are two points in $S'$, say $\x_i, \x_{i+k}$ (as above) such that  $\mathcal{C}(\x_i)= \mathcal{C}(\x_{i+k})=r$ for $i\geq 3$ and $1< k\leq (2n-2)-i$.
Apply to $S'$ the following new coloring $\bar{\mathcal{C}}:S'\to [n-1]$ defined as  $\bar{\mathcal{C}}(\x_j)=\mathcal{C}(\x_{(j + 2n- i - 1)})$ mod($2n-2$). That is, the rotation that sends the corresponding color in $\x_i$ to $\x_1$. Observe that this rotation preserves all the intersection patterns that existed before (by Lemma \ref{ordertype}), and thus 
$\mathcal{N}(\mathcal{P}' )$  is $T_{n-1}$. Lastly, we are now in the position to apply Case 1
again, so the theorem follows.  \end{itemize} 

\begin{figure}[h]
\centering\includegraphics[scale=.35]{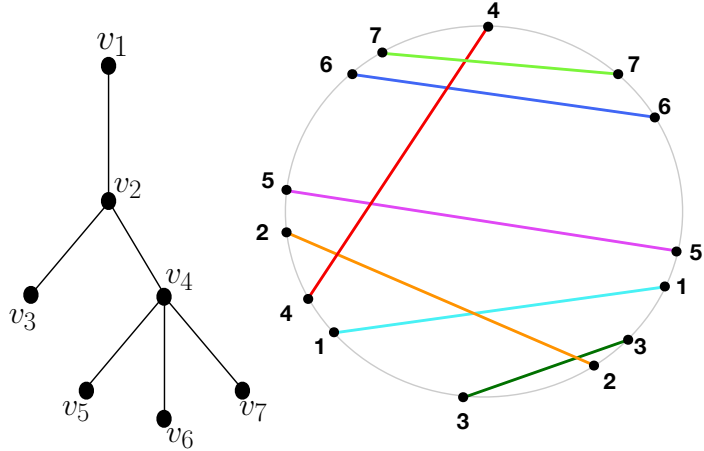}
\caption{Example of a tree with seven nodes and shown as partition induced on a set $S$ of $14$ points in convex position}\label{example1}
\end{figure}


This completes the proof that any set $S$ of $2n$ points in convex position in the plane have a partition whose nerve is isomorphic to any given tree $T_n$.
\end{proof}

To extend our result to the case that the points are in general position, we will use a  famous theorem in combinatorial geometry, the Erd\H{o}s-Szekeres Theorem. This theorem says that every sufficiently large set of points in general position contains a subset of $k$ points in convex position. The fact that this number $N=N(k,2)$ exists for every $k$ was first established in a seminal paper of Erd\H{o}s and Szekeres, \cite{erdosszekeres35} who proved the following bounds on $N(k,2)$. \[ 2^{k-2} + 1 \le N(k,2) \le { 2k-4 \choose k-2 } + 1 .\] A handful of recent papers have improved the upper bound (see for instance ~\cite{Morris} for an excellent survey and a very recent paper by A. Suk  \cite{Suk} showing that $N(k,2)=2^{k+o(k)}$).


By the Erd\H{o}s- Szekeres Theorem we know that ${4n-4 \choose 2n-2 } + 1$ points always contain a $2n$-gon. Then, we can use  Theorem \ref{treeconvex}. Finally we explain
how to extend the partition (or coloring) given by Theorem \ref{treeconvex} to the rest of the points in $\bar{S}$.

\begin{definition}\label{extendable} Let $S$ be a set of points in $\R^d$ and let $\mathcal{P} = S_1, \dots, S_n$ be an $n$-partition of $S$ into $n$ color classes 
that yields a specific nerve $\mathcal{N}(\mathcal{P})$.  We say that a $\mathcal{P}$ is \emph{extendable} if for all $\bar{S}$ containing $S$, there is a partition $\bar{\mathcal{P}} = \bar{S}_1 \dots \bar{S}_n$ of $\bar{S}$ extending $\mathcal{P}$ (meaning $S_i\subset \bar{S}_i$ for all $i$) such that $\mathcal{N}(\bar{\mathcal{P}})$ isomorphic to $\mathcal{N}(\mathcal{P})$.
\end{definition}

Observe that in general, such an extension is not necessarily possible, for example, Figure \ref{fignonextendable} shows a set of six vertices, and a partition in three color classes (see left side of the figure), that is not extendable. Note that any extension that includes the midpoint will change the intersection pattern (see right side of the figure). Surprisingly, in the case of the nerves of the partitions obtained in Theorem \ref{treeconvex} and (Theorem \ref{treescyclicD} in the next subsection), this extension is possible.

\begin{figure}[h]
\centering\includegraphics[scale=.50]{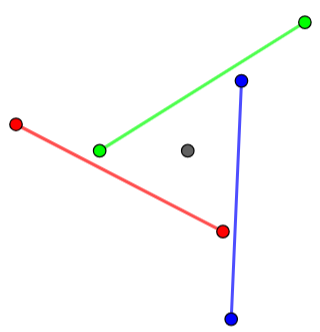}
\hskip .5cm \includegraphics[scale=.50]{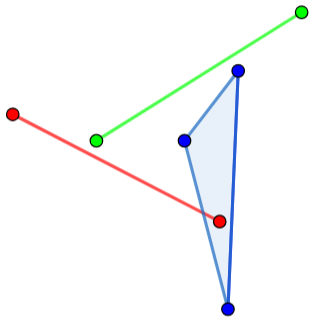}
\caption{}\label{fignonextendable}
\end{figure}


\begin{lemma}\label{extension2d}
Let $T_n$ be a given tree on $n$ nodes and let $S$ be a set of $2n$ points in convex position in the plane. Then the partition $\mathcal{P}$ obtained in proof of Theorem \ref{treeconvex} 
is extendable.
\end{lemma}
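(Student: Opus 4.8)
The plan is to prove something cleaner and slightly stronger than the bare statement: for the partition $\mathcal P=S_1,\dots,S_n$ produced by Theorem~\ref{treeconvex} I will construct a \emph{convex witness}, meaning a family of closed convex sets $C_1,\dots,C_n$ whose union is all of $\R^2$, with $\conv(S_i)\subseteq C_i$ for every $i$, and with $C_i\cap C_j\neq\emptyset$ if and only if $\{i,j\}$ is an edge of $T_n$. Such a witness yields extendability in the sense of Definition~\ref{extendable} at once: given any $\bar S\supseteq S$, assign each extra point $p\in\bar S\setminus S$ to some color $i$ with $p\in C_i$ (possible since the $C_i$ cover $\R^2$) and leave the original points in their classes. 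Then $\conv(\bar S_i)\subseteq C_i$ for all $i$, so $\conv(\bar S_i)\cap\conv(\bar S_j)=\emptyset$ whenever $\{i,j\}\notin E(T_n)$ (no new edges appear), while every edge of $T_n$ survives because hulls only grow, i.e.\ $\conv(S_i)\subseteq\conv(\bar S_i)$. Since $T_n$ is triangle-free, its intersection graph already determines its nerve, so $\N(\bar{\mathcal P})\cong T_n=\N(\mathcal P)$, exactly as required.

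It therefore suffices to build the witness, and I would do this by induction on $n$, mirroring the corner-cutting recursion of Theorem~\ref{treeconvex}. The base cases $n\le 2$ are trivial: every pair of distinct classes is an intended edge, so one may take $C_i=\R^2$ for all $i$. For the inductive step, recall that $T_n$ arises from $T_{n-1}$ by attaching a leaf $v_n$ to $v_r$, that color $n$ is placed on the two neighbors $\bs_2,\bs_{2n}$ of the corner vertex $\bs_1$, and that the chord $\sigma_n=\overline{\bs_2\bs_{2n}}$ crosses \emph{only} $\sigma_r$, since $\bs_1$ is the unique point of $S'$ on the side $L_n^+$ and it carries color $r$. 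Given a witness $\{C_i'\}_{i<n}$ for the inner configuration $S'=S\setminus\{\bs_2,\bs_{2n}\}$ (whose chords realize $T_{n-1}$, the Case~2 rotation being harmless by Lemma~\ref{ordertype}), I would insert $C_n$ as a sufficiently thin convex neighborhood of $\sigma_n$ and then repair the covering locally, enlarging only $C_r'$ into the sliver near the cut-off corner. Because $\sigma_n$ hugs the boundary of the polygon and meets no chord other than $\sigma_r$, a thin enough $C_n$ can be arranged to meet $C_r$ and nothing else, the single new adjacency $(r,n)$ being precisely the attached edge.

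The main obstacle is the geometric bookkeeping in this inductive step: guaranteeing that a thin neighborhood of $\sigma_n$ really meets only the territory $C_r$. Covering the plane forces the $C_i'$ to be fat somewhere, and a priori a fat territory $C_j'$ with $j\neq r$ could protrude into the corner region occupied by $\sigma_n$ even though the chord $\sigma_j$ does not cross $\sigma_n$, which would create the spurious edge $(j,n)$; the same worry applies to the enlargement of $C_r'$. To control this I would carry an extra invariant through the induction, namely that the corner about to be cut is locally surrounded by the single territory $C_r$, so that the repair uses $C_r$ alone and leaves every other $C_i$ untouched and convex. Verifying that this invariant survives each corner cut — equivalently, that the nested convex subdivision produced by the cuts can be colored cell-by-cell so that each color assembles into a convex set realizing exactly the tree adjacency — is the delicate part. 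The non-extendable partition of Figure~\ref{fignonextendable}, whose classes have concurrent chords, admits no such witness, which confirms that this step genuinely exploits the boundary-hugging, non-concurrent structure of the partition supplied by Theorem~\ref{treeconvex} and cannot hold for arbitrary partitions.
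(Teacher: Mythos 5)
Your opening reduction is sound: if one can produce convex sets $C_1,\dots,C_n$ covering $\R^2$ with $\conv(S_i)\subseteq C_i$ and with nerve exactly $T_n$, then extendability in the sense of Definition~\ref{extendable} follows immediately, and your triangle-freeness remark correctly handles the passage from the intersection graph to the full nerve. The genuine gap is that you never construct the witness, and the construction is the entire content of the lemma. Your inductive step is a plan, not a proof: the claim that a thin convex neighborhood of $\sigma_n$ can be inserted so as to meet only $C_r$, and that the covering can then be repaired ``enlarging only $C_r'$,'' is exactly the point you label ``the delicate part'' and leave unverified. Moreover, as stated this plan hits a real obstruction: if $C_n$ is thin, the rest of the cut-off region $\cl(L_n^{+})\setminus C_n$ must be absorbed into $C_r'$, but $C_r'\cup\bigl(\cl(L_n^{+})\setminus C_n\bigr)$ is in general not convex, and replacing it by its convex hull can create spurious intersections with non-neighbors of $r$. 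There is also an unaddressed bookkeeping issue with Case~2 of Theorem~\ref{treeconvex}: the inherited witness is for the rotated coloring of $S'$, so your induction hypothesis must quantify over all runs of the construction (all choices of starting vertex), not over a single one.

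The fix is simpler than your plan and, once made, your argument becomes the paper's proof in disguise. Do not make $C_n$ thin: take $C_n:=\cl(L_n^{+})$, the whole closed half-plane cut off by $L_n$ (it contains $\conv(S_n)\subset L_n$); trim every old territory with $j\neq r$ to $C_j:=C_j'\cap L_n^{-}$ (open half-plane); and keep $C_r:=C_r'$ untouched. Convexity is clear; $\conv(S_j)\subseteq C_j$ persists because for $j\neq r,n$ the chord $\conv(S_j)$ lies strictly in $L_n^{-}$; coverage persists because the trimmed sets together with $C_r'$ still cover $L_n^{-}$ while $C_n$ covers $\cl(L_n^{+})$; every edge of $T_{n-1}$ survives because it is witnessed by a chord crossing lying in the relevant hulls (and, for edges at $r$, on $\conv(S_j)\subset L_n^{-}$); non-edges among old colors survive because trimming only shrinks sets; and the unique new intersection is $C_n\cap C_r\ni\bs_1$. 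Note the sets cannot all be closed --- disjointness plus coverage forces half-open sets along the cut lines --- so drop ``closed'' from your witness definition; for $\bar S$ in general position this costs nothing. This half-plane witness is precisely what the paper's proof of the lemma does implicitly: it inductively recolors $\bar S$ by the rule ``keep the inductive color on $L_n^{-}$; use color $n$ on $\cl(L_n)^{+}$, except color $r$ on $\conv(S_r)$,'' i.e.\ its color regions are exactly these sets.
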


\begin{proof}
Let $\bar{S}$ be an arbitrary finite set of points in $\R^d$, such that $S\subset \bar{S}$. 
We begin by assuming that  the  ``color partition function"  $\mathcal{C}:S\to [n]$ is the 
one given in Theorem  \ref{treeconvex}. It 
yields a partition $\mathcal{P}$ of $S$ with nerve $\mathcal{N}(\mathcal{P})$ isomorphic to $T_n$ and $n$ is the last color added.
Recall that we denoted by $v_r$ the node in $T_{n-1}$ such that $\{v_n,v_r\}$ is the leaf of $T_n$ in which we added $v_n$. 

The extension of $\mathcal{P}$ of $S$ will be given through induction on $n$, by a ``color partition function" $\bar{\mathcal{C}_n}:\bar{S}\to [n]$ as follows:

\noindent a) For $n=1$, let $\bar{\mathcal{C}_1}(\x)=1$ for every point in $\bar{S}$. \\
\noindent b) For the induction step, the extension $\bar{\mathcal{C}}_{n-1}: \bar{{S}} \to [n-1]$ exists by induction hypothesis. Here is how we obtain the extension $\bar{\mathcal{C}}_{n}$:

Let $S_j$ denote the set of points in $S$ of color $v_j$, or $j$ for simplicity.
Consider the line $L_{n}$ through $S_n$ (it is given by points $\s_2$ and $\s_{2n}$ in Theorem \ref{treeconvex}), and recall that this line
leaves only one element of $S_{r}$ in one side, say $L_n^{+}$, and the rest of the points of $S$ in the other side $L_n^{-}$.  
We define $\bar{\mathcal{C}}:\bar{S}\to [n]$ as follows:  $ \bar{\mathcal{C}_n}(\x) = \bar{\mathcal{C}}_{n-1}(\x) \text{\ when \ } \x\in L_n^{-}$, \ \ $\bar{\mathcal{C}_n}(\x)= r \text{\ when \ } \x \in \conv(S_r)$, and, finally,
$\bar{\mathcal{C}_n}(\x)= n  \text{\ when \ } \x\in \cl(L_n)^{+} \text{\ but \ } \x\notin \conv(S_r).$ Here $\cl(L_n)^{+}$ denotes the closed half-plane at the right of $L_n$.

\begin{figure}[h]
\centering\includegraphics[width=5.42cm]{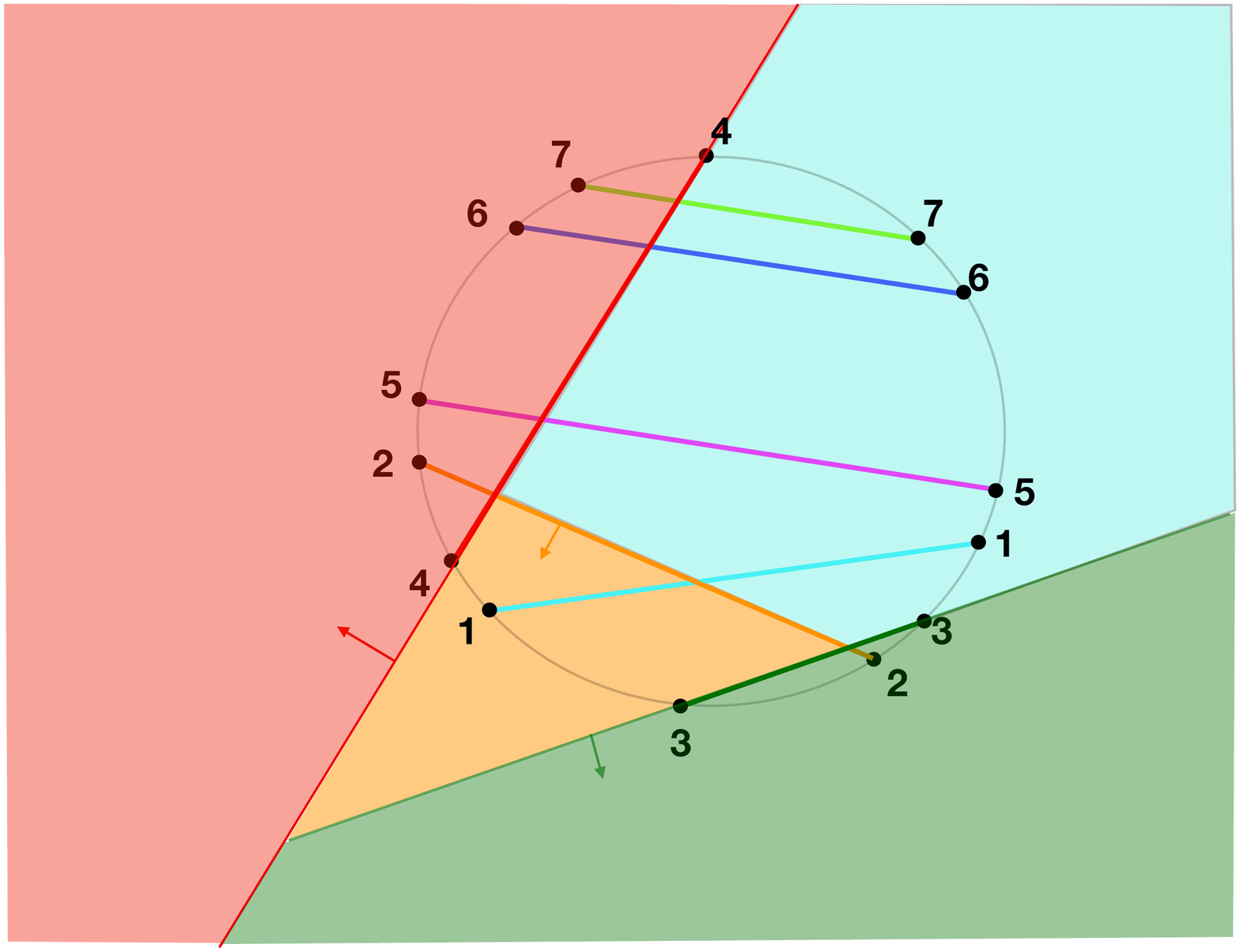} \includegraphics[width=5.42cm]{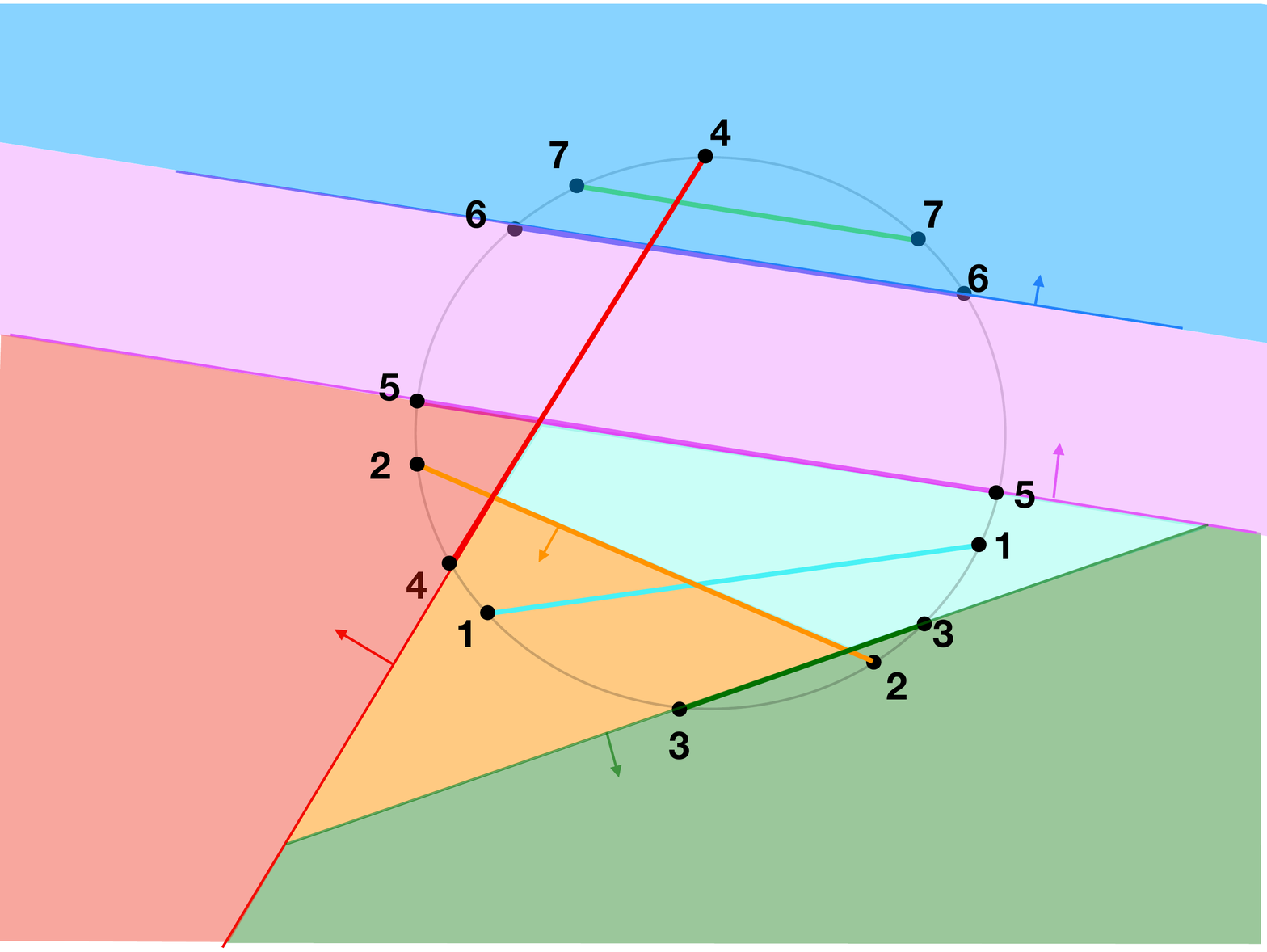} \includegraphics[width=5.42cm]{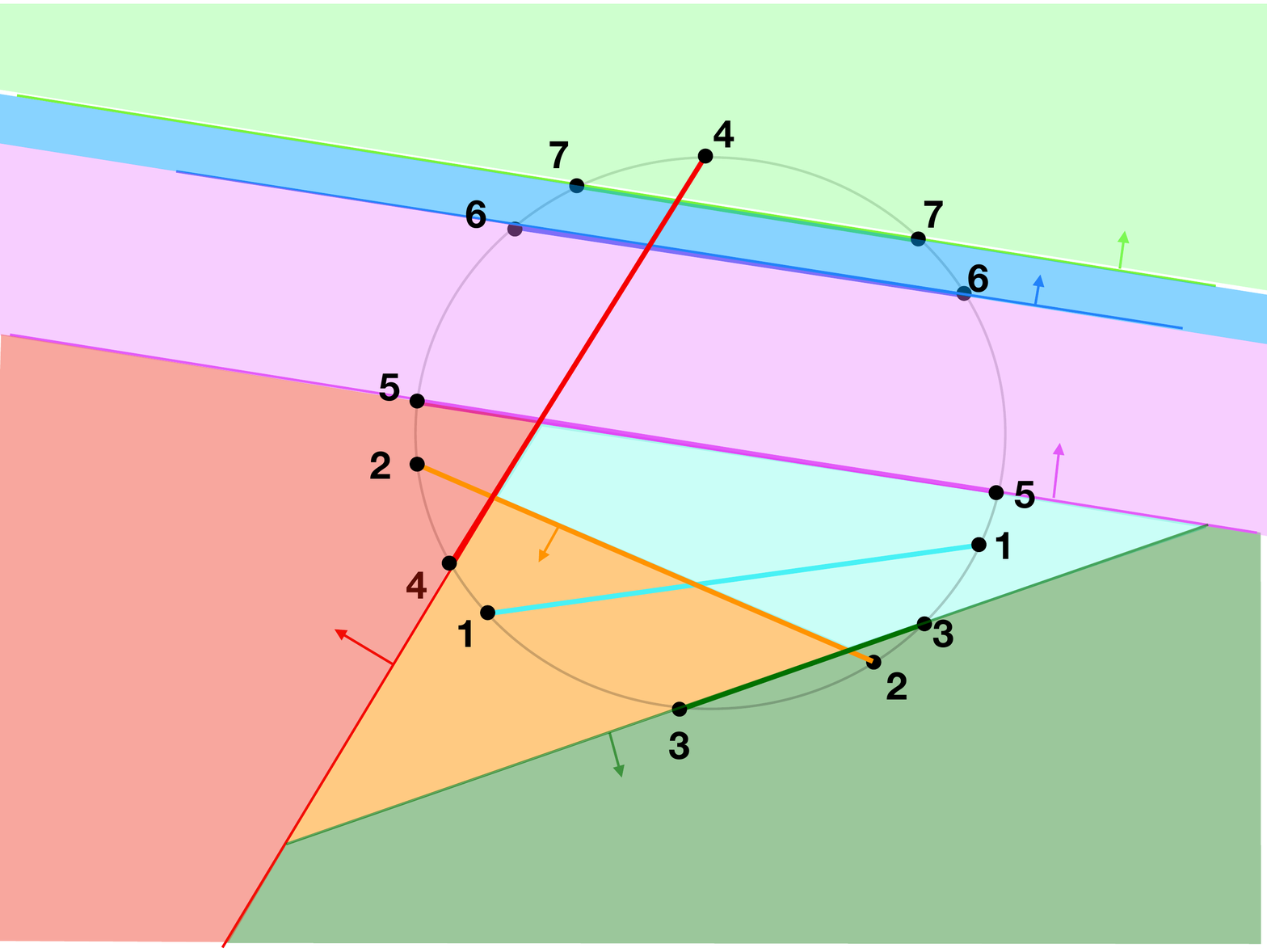}\\
\caption{The extension of the partition obtained in Figure \ref{example1}. The left figure, is the extension up to $n=4$, the central figure is the extension up to $n=6$, and
the right figure is up to $n=7$}\label{exampleextension}
\end{figure}

Observe that, by the induction process, the intersection pattern of 
$\bar{S}_1,\dots \bar{S}_{n-1}$ are the same in ${L_n}^{-}$ by construction. Furthermore, 
$\cl(L_n)^{+}$ does not intersect any other element in the partition, so no new 
intersections occur.
\end{proof}


\subsection{Proof of Theorem ~\ref{tverbertrees+cycles} (A) in $\R^d$}\label{treesind}

Next, we will show a general dimension version of Theorem ~\ref{treeconvex}. The
pattern of the proof is very similar to the planar case, but we will need to use properties
of cyclic polytopes and their oriented matroids.
 A  parametrized curve $\alpha : \R \longrightarrow \R^d$ is a  \emph{$d$-order curve} (sometimes called \emph{alternating}) when
no affine hyperplane $H$ in $\R^d$ meets the curve in more than $d$ points.
An example is the famous \emph{moment curve}. See  \cite{sturmfelsorder}, \cite{CordovilDuchet}, \cite{OMbook}.

In what follows we will use \emph{ordered cyclic $d$-polytopes} $C_m(d)$ which are obtained as the convex hull of $m$ 
vertices $S:=\{\x_1,\x_2,\dots \x_m\}$ along a $d$-order curve in $\R^d$ and thus, we may order the vertices of this polytope in an increasing sequential manner, 
say $\alpha(t_1)=\x_1<\alpha(t_2)=\x_2<\dots <\alpha(t_m)=\x_m$.  Ordered cyclic polytopes are very special because 
every subpolytope is also cyclic with respect to the same vertex order, i.e., the corresponding oriented matroid is  \emph{alternating}. 
Alternating means the chirotope has all positive signs. See Section 9.4 in the book \cite{OMbook}. 

\smallskip

\begin{theorem}\label{treescyclicD}
Let $T_n$ be any tree with $n$ nodes, and let $S$ be the vertices of an ordered cyclic $d$-polytope $C_m(d)$ with $m=(n-1)(d+1)+1$ vertices in $\R^d$. Then, there exists a partition 
$\mathcal{P}$ of $S$ such that the nerve $\N(\mathcal{P})$ is isomorphic to $T_n$. 
\end{theorem}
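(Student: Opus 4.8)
The plan is to mirror the planar induction from Theorem~\ref{treeconvex}, but replace the ``convex position'' hypothesis with the alternating oriented matroid structure of the ordered cyclic $d$-polytope. I would proceed by induction on $n$, the number of nodes. For the base cases $n=1$ and $n=2$, a single color (respectively a Radon partition, which exists since $d+2$ points in general position always admit one) handles things directly. For the inductive step, I assume $T_n$ arises from $T_{n-1}$ by attaching a leaf $v_n$ to some node $v_r$, and I take $S$ to be the vertex set of $C_m(d)$ with $m=(n-1)(d+1)+1$, ordered as $\x_1<\x_2<\dots<\x_m$ along the $d$-order curve.

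The key structural idea is to peel off the last $d+1$ vertices (the ``top'' of the order) to form a new color class $n$, leaving a subset $S'$ of $m-(d+1)=(n-2)(d+1)+1$ points. Because every subpolytope of an ordered cyclic polytope is again cyclic with the same vertex order, $S'$ is itself the vertex set of an ordered cyclic $d$-polytope on $(n-2)(d+1)+1=((n-1)-1)(d+1)+1$ vertices, so the induction hypothesis applies and yields a partition $\mathcal{P}'$ with $\N(\mathcal{P}')\cong T_{n-1}$. The crux is then to argue, using the Gale evenness condition (or equivalently the alternating chirotope described just before the statement), that the convex hull of the top $d+1$ vertices assigned color $n$ meets $\conv(S_r)$ but no other $\conv(S_i)$. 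In the alternating oriented matroid, a set of $d+1$ consecutive top vertices spans a facet-defining hyperplane that separates them cleanly from the remaining points, which is the $d$-dimensional analogue of the separating line $L_n$ in the planar proof.

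The step I expect to be the main obstacle is arranging the analogue of the two cases from the planar argument: it may happen that the color $r$ does not already appear among the vertices immediately below the separating hyperplane, so that the fresh class $n$ fails to touch $\conv(S_r)$. In the plane this was handled by a cyclic rotation of colors (Case~2), justified by Lemma~\ref{ordertype} since rotation preserves the order type of points in convex position. In $\R^d$ the analogous maneuver is more delicate because a cyclic shift of the vertices of a cyclic polytope does \emph{not} in general preserve the alternating oriented matroid. I would therefore instead argue that one can always \emph{choose} which node plays the role of $v_r$, or reassign one of the two color-$r$ points to sit just below the hyperplane, exploiting the fact that the intersection $\bigcap$ in the nerve only records whether convex hulls meet; since color $i$ is used exactly twice and I have freedom in placing those two representatives, I can guarantee that $\conv(S_r)$ straddles the separating hyperplane while every other $\conv(S_i)$ stays on one side. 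Making this selection rigorous, and checking that it does not disturb the already-established edges of $T_{n-1}$, is where the careful work lies; the separation properties needed should follow from the Gale evenness characterization of facets of cyclic polytopes together with the general-position assumption, so that no unintended higher intersections (which are irrelevant anyway, since trees are triangle-free) create spurious edges.

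Finally, I would note that general position of $S$ guarantees all the convex hulls involved are genuinely $(d+1)$- or lower-dimensional simplices meeting transversally, so the computed nerve is exactly its $1$-skeleton and agrees with $T_n$; this is consistent with the remark following Lemma~\ref{ordertype} that for triangle-free target complexes the nerve equals the intersection graph. The extension to arbitrary point sets in $\R^d$ (not just cyclic-polytope vertices) is then deferred to a separate lemma, exactly as Lemma~\ref{extension2d} did in the planar case, combined with the multidimensional Erd\H{o}s--Szekeres theorem to produce a cyclic sub-configuration of the required size.
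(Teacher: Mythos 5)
There is a genuine gap, and it sits exactly at the step you yourself flag as the crux. Your plan makes the new color class equal to the last $d+1$ vertices $\x_{m-d},\dots,\x_m$ of the ordered cyclic polytope. But this class can \emph{never} be adjacent to anything in the nerve: in the alternating oriented matroid every circuit (minimal Radon partition) interleaves along the curve order, and since all points of $S'=\{\x_1,\dots,\x_{m-d-1}\}$ precede all points of the top block, no circuit has one part inside the top block and the other part inside $S'$; hence $\conv(\{\x_{m-d},\dots,\x_m\})\cap\conv(S')=\emptyset$. (Concretely, for the moment curve the hyperplane $x_1=c$ with $t_{m-d-1}<c<t_{m-d}$ strictly separates the two sets, and disjointness of convex hulls is an order-type invariant, cf.\ Lemma~\ref{ordertype}.) So the node $v_n$ is \emph{always} isolated, not just ``when color $r$ fails to appear just below the hyperplane,'' and your proposed repairs do not touch this: you cannot ``choose which node plays the role of $v_r$'' (it is the neighbor of the deleted leaf, dictated by $T_n$), and placing a color-$r$ point immediately below the separating hyperplane still leaves the hulls disjoint. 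You have also misread the planar proof: there the new class is \emph{not} two consecutive vertices; $\bs_2$ and $\bs_{2n}$ sandwich the color-$r$ point $\bs_1$, and it is precisely this interleaving that produces the crossing segments. Finally, ``each color is used exactly twice'' is a planar artifact; in $\R^d$ the classes produced by the construction have up to $d+1$ points.

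The paper's proof repairs this by inserting the $d+1$ new vertices next to the last color-$r$ vertex rather than peeling them off the top. Since the coloring produced by the induction depends only on the (alternating) order type, the position $k$ of the last vertex of color $r$ is known in advance; one then takes $Q=\conv(\x_1,\dots,\x_k,\x_{k+d+2},\dots,\x_m)$, skipping the $d+1$ vertices immediately after $\x_k$, which is again an ordered cyclic polytope with $(n-2)(d+1)+1$ vertices, and applies the induction hypothesis to $Q$. Then the $d+2$ \emph{consecutive} vertices $R=\{\x_k,\x_{k+1},\dots,\x_{k+d+1}\}$ are Radon-partitioned into $A\ni\x_k$ and $B$; coloring $A$ with $r$ and $B$ with $n$ creates the edge $(r,n)$ from the Radon intersection, while $Q\cap R=\{\x_k\}$ together with convex position of $S$ rules out any other new edge. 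The freedom you were missing is that the induction hypothesis applies to \emph{any} subset spanning an ordered cyclic polytope, so the new vertices can be placed where the tree needs them --- interleaved with the class of $v_r$ --- instead of at the end of the order.
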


\begin{proof}
Let $C_m(d)$ be an ordered cyclic $d$-polytope, with $m$ vertices and assume as before $S:=\{\x_1,\x_2,\dots \x_m\}$ along the curve.
As in the case of the plane, the proof will be given by induction on $n$ the number of nodes of the tree $T_n$. 

If $n=1$, again there is nothing to prove. If $n=2$, the only tree with two vertices is $K_2$. Then by Radon's theorem, any set of $d+2$ points in $S$ 
can be partitioned in $S=S_1\cup S_2$ with $2\leq |S_i|\leq d$ for $i\in \{1,2\}$ and intersection graph $K_2$, see Figure \ref{example} on the left. 

\begin{figure}[h]
\centering\includegraphics[scale=.35]{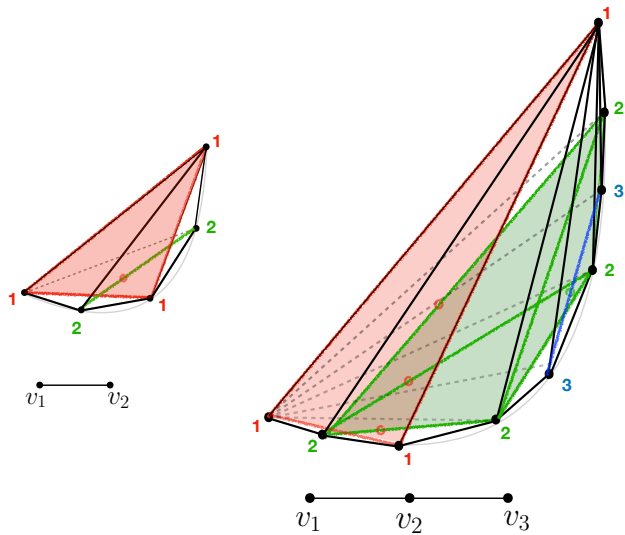}
\caption{Two examples, on the left, we show a tree on two nodes shown as a partition in a set $S$ of five vertices of the cyclic polytope $C_5(3)$. On the right side we represent a  tree on three nodes as a partition of the nine vertices of another cyclic polytope in $\R^3$, this time $C_9(3)$.}\label{example}
\end{figure}

For the induction step, suppose $T_n$ was obtained from $T_{n-1}$ by adding the node $v_n$ to a node $v_r\in T_{n-1}$ such that $\{v_n,v_r\}$ is a leaf of $T_n$, and assume that $T_{n-1}$ is the nerve of some set $\mathcal{N}(\mathcal{P}')$ where the set $S'$ are the vertices of the ordered cyclic polytope with exactly $(n-1)(d+1)-d$ vertices in $\R^d$ and $\mathcal{P}'=\{S'_1,S'_2,\dots ,S'_{n-1}\}$ are the color classes with color $1,2,\dots n-1$ respectively, via a ``coloring function" 
$\mathcal{C}':S'\to [n-1]$. 

Let $k$ the maximum number in $[n]$ such that $\x_k$ is in $S'_r$.  Next in $C_m(d)$ consider the subpolytope $Q$  of $(n-1)(d+1)-d$ vertices, obtained as the convex hull  $\conv(\x_1,\x_2,\dots ,\x_k , \x_{k+d+2},\dots \x_m)$, and $R$ is the polytope consisting of the convex hull of the complement of $Q$ and $\x_k$, thus $R=\conv(\{\x_k, \x_{k+1}, \dots, \x_{k+d+1}\})$. Note both $Q$ and $R$ are ordered cyclic polytopes and $Q\cap R=\{\x_k\}$. Thus by the induction hypothesis there exists a partition of the vertices of $Q$ into $n-1$ color classes whose nerve is isomorphic to $T_{n-1}$ as before.  Next, by Radon's Lemma there exists a partition into two color classes $A$ and $B$ of the $d+2$ vertices of $R$. 

Say $\x_k\in A$, then define a ``coloring function" $\mathcal{C}:S \to [n]$ in the following way: $\mathcal{C}(\x)=\mathcal{C}'(\x)$ if $\x$ is a vertex of $Q$,  $\mathcal{C}(\x)=r$ if $\x\in A$, and, finally, $\mathcal{C}(\x)=n$ if $\x\in B$. That is  $S_n\cap S_r \not=\emptyset$ and no further intersections occur. By the construction the parts of $\mathcal{P}$ consist of the $n$ color classes determined by
the coloring  $\mathcal{C}$. The nerve $\mathcal{N}(\mathcal{P})$ is isomorphic to $T_n$.
\end{proof}



In dimension two, we relied on Erd\H os-Szekeres to build a convex polygon.
For the general case in $\R^d$, we need a multi-dimensional version of Erd\"os-Szekeres theorem that follows from an application of the hypergraph Ramsey theorem~\cite{CDFhypergraphramsey}. The theorem we
need was first given by Gr\"unbaum (Exercise 7.3.6 in \cite{Gbook}) and Cordovil and Duchet \cite{CordovilDuchet} using oriented matroid methods. See Proposition 9.4.7 of \cite{OMbook} for a short proof. The theorem 
shows the existence of a number $N=N(k,d)$  such that every set of $N$ points in general position in $\R^d$ contains the vertices of an ordered cyclic $d$-polytope. 
 $N$ is bounded from above by the hypergraph Ramsey number $R_{d+1}(m)$ (see the Introduction)
 ensuring the existence of an alternating oriented matroid (hence an ordered cyclic polytope with $m$ vertices). 
  
According to \cite{sturmfelsorder}
when the oriented matroid is alternating, then its cyclic $d$-polytope is on a $d$-order curve in $\R^d$ and every subpolytope of it is also cyclic. 
This is quite a useful fortuity, since it is well known, that in odd dimensions there exist combinatorial cyclic polytopes with that some subpolytopes which are not cyclic (see page 104 of the same paper).
By these facts, we know that if $\bar{S}$ is a set of points in general position in $\R^d$ with at least  $R_{d+1}((n-1)(d+1)+1)$ points, then $\bar{S}$ contains a set $S$ consisting of the $m=(n-1)(d+1)+1$ vertices of an ordered cyclic $d$-polytope $C_m(d)$.

To finish the proof we just need to ``extend", as we did in the case of the plane, the partition given in Theorem \ref{treescyclicD} (for the vertices of $C_m(d)$) to a partition $\bar{P}$ of $\bar{S}$  in such a way that the nerve $\mathcal{N}(\bar{\mathcal{P}})$ is preserved. Lemma \ref{extension} below guarantees that this is always possible, finishing the proof of Theorem
\ref{tverbertrees+cycles} (A).



\begin{lemma}\label{extension}
Let $T_n$ be a given tree and let $S$ be the vertices of an ordered cyclic polytope with $m=(n-1)(d+1)+1$ vertices in $\R^d$.
Then the specific partition $\mathcal{P}$ obtained in Theorem \ref{treescyclicD} is extendable.
\end{lemma}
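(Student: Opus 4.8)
The plan is to follow the planar argument of Lemma~\ref{extension2d} almost verbatim, replacing the separating line $L_n$ by a separating hyperplane $H_n$ whose existence is guaranteed by the order-curve structure of the cyclic polytope. As there, I would induct on $n$ and build a ``color partition function'' $\bar{\mathcal{C}}_n : \bar{S} \to [n]$ that agrees with the partition $\mathcal{P}$ of Theorem~\ref{treescyclicD} on $S$ and whose nerve on all of $\bar{S}$ is again $T_n$.

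For the base case $n=1$ I would color every point of $\bar{S}$ with color $1$. For the inductive step I recall from the proof of Theorem~\ref{treescyclicD} that $T_n$ is obtained from $T_{n-1}$ by adjoining the leaf $\{v_n,v_r\}$, that $T_{n-1}$ is realized on the subpolytope $Q = \conv(\x_1,\dots,\x_k,\x_{k+d+2},\dots,\x_m)$ by a partition $\mathcal{P}'$, that color $n$ is the Radon class $B \subseteq R\setminus\{\x_k\} = \{\x_{k+1},\dots,\x_{k+d+1}\}$, and that the edge $(r,n)$ is witnessed by a Radon point lying in $\conv(A)\cap\conv(B)$. Since $\mathcal{P}'$ is the partition of Theorem~\ref{treescyclicD} for $T_{n-1}$, the inductive hypothesis (extendability in dimension $d$ for $n-1$) yields an extension $\bar{\mathcal{C}}_{n-1}:\bar{S}\to[n-1]$ of $\mathcal{P}'$ whose nerve is $T_{n-1}$.

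The key geometric step is the construction of $H_n$. Because $S$ lies on a $d$-order curve, every hyperplane meets that curve in at most $d$ points; the contiguous arc $\{\x_{k+1},\dots,\x_{k+d+1}\}$ is separated from the remaining vertices of $S$ by at most two gaps along the curve, and since $2\le d$ for $d\ge 2$ there is a hyperplane $H_n$ strictly separating this arc from all other vertices of $S$. I would choose $H_n$ generically so that it misses the finitely many points of $\bar{S}$, and write $H_n^{+}$ for the open side containing the arc. I then define $\bar{\mathcal{C}}_n$ to equal $\bar{\mathcal{C}}_{n-1}$ on the open side $H_n^{-}$, to assign color $r$ to points of $H_n^{+}\cap\conv(S_r)$, and to assign color $n$ to all remaining points of $H_n^{+}$; this agrees with $\mathcal{P}$ on $S$ because the only original vertices of $S$ in $\overline{H_n^{+}}$ are the arc $\{\x_{k+1},\dots,\x_{k+d+1}\}$, which already carry colors $r$ and $n$.

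Finally I would check that the nerve is exactly $T_n$. Every edge of $T_{n-1}$ is already witnessed by original vertices of $Q$, all of which lie in $H_n^{-}$, so restricting colors $1,\dots,n-1$ to $H_n^{-}$ preserves precisely those edges and introduces none among them. Color $n$ lies in $\overline{H_n^{+}}$ and is strictly separated by $H_n$ from every class $j\neq r$, so no edge $(n,j)$ with $j\neq r$ can appear, while the edge $(r,n)$ survives because the Radon point lies in $\conv(B)\cap\conv(A)\subseteq\conv(\bar{S}_n)\cap\conv(\bar{S}_r)$. Confining the $H_n^{+}$ occurrences of color $r$ to $\conv(S_r)$ is what prevents the enlarged class $\bar{S}_r$ from reaching any non-neighbor $j$, and since the resulting $1$-skeleton is a tree it contains no $3$-cycle, so the nerve has no $2$-faces and equals $T_n$. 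I expect the main obstacle to be exactly this last verification: proving that collapsing the entire sliver $H_n^{+}$ into the two colors $r$ and $n$ neither destroys a $T_{n-1}$-edge nor creates a spurious edge $(r,j)$ or $(n,j)$, which is where the bounded-intersection property of the order curve and the confinement to $\conv(S_r)$ carry the argument.
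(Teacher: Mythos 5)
Your proposal is essentially the paper's own proof: the same induction on $n$ with base case coloring all of $\bar{S}$ with color $1$, the same separating hyperplane isolating $R\setminus\{\x_k\}$ from the rest of $S$ (the paper obtains it from the disjointness of $Q$ and $R$ away from their common vertex $\x_k$ rather than from the order-curve property, and your variant clause sending new points of $H_n^{+}\cap\conv(S_r)$ to color $r$ rather than to color $n$ yields the same nerve, since class $r$ acquires the same convex hull either way), and the same final verification scheme. The step you single out as the main obstacle --- that enlarging class $r$ by the Radon part $A$ creates no spurious edge $(r,j)$ --- is exactly the step the paper dispatches with the one-line assertion ``so no further intersections occur,'' so your write-up matches the published argument in both structure and level of rigor.
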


\begin{proof}

Let $\bar{S}$ be an arbitrary finite set of points in $\R^d$, such that $S\subset \bar{S}$. 
Let $S_j$ denote the set of points in $S$ of color $v_j$, or $j$ for simplicity.
Let us begin by assuming that  the  ``color partition function"  $\mathcal{C}:S\to [n]$, given in Theorem \ref{treescyclicD},
yields a partition $\mathcal{P}$ of $S$ with nerve $\mathcal{N}(\mathcal{P})$ isomorphic to $T_n$. The extension of $\mathcal{P}$ of $S$ will be given by induction on the number of nodes $n$. \\

\noindent a) In the case $n=1$ assign $\bar{\mathcal{C}_1}(\x)=1$ for every point in $\bar{S}$.\\
\noindent b) For the induction step note that the induction hypothesis guarantees 
the extension 
 $\bar{\mathcal{C}}_{n-1}: \bar{{S}} \to [n-1]$ exists. 
 

To begin observe that polytopes $Q$ and $R$ defined in Theorem \ref{treescyclicD} satisfy that 
$Q\cap R=\{\x_k\}$ so $(Q\setminus \{\x_k\})\cap(R \setminus \{\x_k \})=\emptyset$. 
Therefore, there exists a $(d-1)$-hyperplane $H$ that separates these two sets and 
leaves points of color $r$ in both sides of the hyperplane. Furthermore, $R$ is completely contained in the closure of one of the sides of this hyperplane, say $H_n^-$. 
The ``color partition function" $\bar{\mathcal{C}_n}:\bar{S}\to [n]$ is given as follows: \\

$ \bar{\mathcal{C}_n}(\x) = \bar{\mathcal{C}}_{n-1}(\x) \text{\ if \ } \x\in H_n^{-}$,\ \ $\bar{\mathcal{C}_n}(\x) = r \text{\ if \ } \x \in S_r$, and 
$\bar{\mathcal{C}_n}(\x) = n  \text{\ if \ } \x\in \cl(H_n^{+}) \text{\ and \ } \x\notin S_n$.

As before $\cl(H_n^{+})$ is the closed half-hyperplane containing only points in $R$ 
of color $n$ and color $r$.

Observe that by the induction process the intersection pattern of 
$\bar{S}_1,\dots \bar{S}_{n-1}$ are the same in ${H_n}^{-}$ by construction, and 
$\cl (H_n^{+}) \cap S_r\not= \emptyset$ yields the leaf $\{v_r,v_n\}$. Furthermore 
$S_n \subset \cl(H_n^{+}) $ does not intersect any other elements in the partition since they are
contained in $H_n^-$, so no further  intersections occur.

\end{proof}

\subsection{Proof of Theorem~\ref{tverbertrees+cycles} (B)}

\begin{proof}[Proof of Theorem \ref{tverbertrees+cycles} part (B)]
Suppose that $\bar{S}$ is a set of at least $nd + n + 4d$ points in general position in $\R^d$. We start by projecting the points onto a generic $2$-plane $H$ where we can assume, without loss of generality, that the points of $\bar{S}$ have distinct projections  onto it.  Let $S$ be the 
projection of $\bar{S}$, now planar points.

\begin{lemma}~\label{pizza}
There exists a circle $C$ containing all these projected planar points in $S$, and a subdivision $C'$ of $C$ into $n$ sectors such that:\\
(i) Each sector contains at least $d+1$ points.\\
(ii) No two adjacent sectors form a combined angle of more than $\pi$ radians.
\end{lemma}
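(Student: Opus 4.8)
The plan is to realize condition (ii) as a constraint purely on the angular positions of the $n$ cutting rays, and to secure both conditions at once by choosing the center of $C$ to be a deep point of $S$. Concretely, I would place the center $O$ at a centerpoint of the planar set $S$ (which exists by the centerpoint theorem), chosen generically so that no line through $O$ meets two points of $S$ and no two points of $S$ subtend the same angle at $O$; the radius of $C$ is then taken large enough to enclose $S$. Writing $N=|S|\ge n(d+1)+4d$, the centerpoint $O$ has Tukey depth at least $\lceil N/3\rceil$, and since $n\ge 4$ forces $N\ge 4(d+1)+4d=8d+4\ge 6(d+1)$, every closed half-plane through $O$ contains at least $2(d+1)$ points. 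This spread estimate is the engine of the proof, and it is precisely where the surplus of $4d$ points is spent.

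Next I would reformulate the angle condition. If the cutting rays sit at cyclically increasing angles $\beta_1<\cdots<\beta_n$ and the $i$-th sector has angular width $\gamma_i=\beta_{i+1}-\beta_i$, then condition (ii) reads $\gamma_i+\gamma_{i+1}\le\pi$ for all $i$, equivalently $\beta_{i+2}\le\beta_i+\pi$: every ``two-step'' angular gap between consecutive cutting rays is at most $\pi$. The depth estimate guarantees that there is no empty open half-turn around $O$ (each open half-turn still contains at least $2(d+1)-1\ge 1$ points, since a generic diameter through $O$ carries at most one point of $S$), so every angular gap between consecutive points of $S$ is strictly less than $\pi$. This rules out the degenerate situation in which a single sector, or a pair of sectors, is forced to span more than a half-turn.

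With this setup I would build the rays by a greedy sweep. Starting a ray $\beta_1$ in a suitable gap, I repeatedly place the next ray so that each sector receives at least $d+1$ points while maintaining the invariant that two consecutive sectors lie inside a common half-turn: since any closed half-turn $[\beta_i,\beta_i+\pi]$ contains at least $2(d+1)$ points, it has room for two disjoint blocks of $d+1$ consecutive points, so $\beta_{i+1}$ and $\beta_{i+2}$ can always be chosen with $\beta_{i+2}\le\beta_i+\pi$. Sweeping once around the circle then yields $n$ sectors satisfying (i) together with all two-step gaps, except possibly the two gaps straddling the starting ray.

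The main obstacle is closing the cycle: after the sweep returns to $\beta_1$, one must verify that the two wrap-around two-step gaps are also $\le\pi$ and that the last sector still carries $d+1$ points. I would handle this by a continuity argument, sliding the whole family of rays (equivalently, varying the initial angle $\beta_1$) and invoking an intermediate-value/averaging step to balance the wrap-around sectors, with the remaining surplus points absorbing the discrepancy. The genuinely extremal case is $n=4$: summing the four inequalities $\gamma_i+\gamma_{i+1}\le\pi$ forces each to be an equality, so the four rays must form two lines through $O$, and the statement degenerates to splitting $S$ into four quadrants of at least $d+1$ points each by two concurrent lines. This I would obtain from a ham-sandwich/continuity argument (Section 1.3 of \cite{Mbook}) applied to the two line-directions through the fixed centerpoint $O$, using once more that every half-plane through $O$ already holds at least $2(d+1)$ points so that each half can be cut in two.
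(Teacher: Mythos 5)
Your overall strategy (centerpoint depth as the engine, then angular sectors around that point) is genuinely different from the paper's, and the depth estimate you derive is correct: for $N\geq n(d+1)+4d$ with $n\geq 4$, every closed half-plane through a centerpoint $O$ contains at least $\lceil N/3\rceil\geq 2(d+1)$ points. But the proof collapses exactly where you admit it does: closing the cycle. The interior two-step gaps are the easy part; the entire content of the lemma sits in the wrap-around constraints, and you defer those to an unspecified ``intermediate-value/averaging step.'' That step cannot be waved through. The greedy boundaries $\beta_2,\dots,\beta_n$ are \emph{not} continuous functions of the starting angle $\beta_1$: each jumps by a positive amount whenever $\beta_1$ sweeps past a point of $S$, so no intermediate-value argument applies directly. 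Moreover, the discrepancy to be balanced need not be small: with two antipodal clusters of roughly $N/2$ points each (a configuration with a very deep centerpoint), any valid solution must contain two ``crossing'' sectors whose widths sum to nearly $2\pi$, so both must be within $O(\epsilon)$ of $\pi$, and the cyclic constraints hold only if the ray placements inside the two clusters are aligned to within $O(\epsilon)$ of each other — an alignment the greedy sweep, driven purely by point counts from an arbitrary start, makes no attempt to achieve. Your $n=4$ fallback is also flawed as stated: you correctly observe that for $n=4$ the rays must form two concurrent lines, but the Ham-Sandwich theorem cannot be ``applied through the fixed centerpoint $O$'' — it produces a bisecting line whose location you do not control, and prescribing a point on it is precisely what the theorem does not allow. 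Your justification (each half-plane through $O$ holds $\geq 2(d+1)$ points, so each half can be cut in two) yields only the two within-half conditions; the crux is the two \emph{cross} conditions aligning the cut of the upper half with the cut of the lower half into a single line through $O$. These do follow from the depth of $O$, but only via a rotating-line argument that is nowhere in your proposal.

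The paper's proof shows how both problems are avoided at once: it never fixes the center in advance. It takes a halving line $L_1$, applies Ham-Sandwich to the two halves to obtain $L_2$, and \emph{defines} the center of $C$ as $\p=L_1\cap L_2$. The four resulting quadrants each contain at least $\lfloor (nd+n+4d)/4\rfloor\geq d+1$ points, adjacent quadrants combine to exactly $\pi$, and all further subdivision happens strictly inside a quadrant, so condition (ii) is inherited for free — the cyclic constraint is satisfied once, at the coarsest level, and never revisited; no wrap-around case ever arises. A counting argument ($j_i\leq k_i(d+1)+d$, summed over the four quadrants) shows the subdivision has at least $n$ sectors, and merging adjacent sectors within a quadrant gives exactly $n$. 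If you wish to salvage your variant with the center fixed at $O$, the repair is to prove, by rotating a second line through $O$ and using the depth bound to establish the cross conditions, that two lines through $O$ create four quadrants with at least $d+1$ points each, and then proceed as the paper does; the greedy sweep should be abandoned rather than patched.
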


\begin{proof} We start by picking a line $L_1$ with at least $\lfloor \frac{nd + n + 4d}{2}\rfloor$ points on both sides of $L_1$. 
Denote by $L_{1}^-$ and $L_{1}^+$ respectively, the open half-spaces defined by $L_1$ and $M_1^+, M_1^-$ the points of $S$ on the two half-spaces of $L_1$. Applying the Ham Sandwich Theorem (see Section 1.3 in \cite{Mbook}) to the sets $M_{1}^-$ and $M_{1}^+$, we can find a line $L_2$ 
so that  $L_1$ and $L_2$ together separate the plane into four regions, say $R_1$, $R_2$, $R_3$ and $R_4$ with at least$\lfloor \frac{nd + n + 4d}{4}\rfloor$ projected points in each region. Note that  $\lfloor \frac{nd + n + 4d}{4}\rfloor \geq d+1$ points.

Denote by $\p$ the point in the plane where $L_1$ and $L_2$ intersect, and let $C$ be a circle centered at $\p$ that contains all the projected points. Now we choose arcs emanating from $\p$ to subdivide each of the four regions $R_1$,$R_2$,$R_3$, and $R_4$ into as many subregions, containing at least $d+1$ points (note that each of the $R_i$ have at least $d+1$ points in them by construction).
This can be done as follows: 

If $R_{i}'$ has at least $2d+2$ points, then take a line emanating from $\p$ and rotate it until it divides $R_i$ into two regions, one with $d+1$ points denoted $R_{i1}$, and the other with the remaining (at least $d+1$) points in $R_i$ denoted $R_{i}'$. Otherwise do nothing.
Repeating this process as many times as possible, we will obtain a subdivision of each $R_i$ into subregions, all but one of which have 
exactly $d+1$ points, and none of which have more than $2d+1$ points. We call the final regions of this recursive process sectors.

%
Since the original four regions  $R_1, R_2, R_3, R_4$ satisfy property claim (ii) of the lemma, and process of subdivision is
made to show claim (i) holds after subdividing the four regions, all we have left to do is to check there are $n$ sectors. For this,
let $k_1, k_2, k_3,$ and $k_4$ denote the respective number of  sectors formed from each of the four regions, and $j_1, j_2, j_3$ and $j_4$ denote the number of points in each region. It suffices to show that $k_1 + k_2 + k_3 + k_4 \geq n$ because we can always merge adjacent
sectors within the same region $R_i$, while preserving claims (i) and (ii).

Our procedure for generating subdivisions guarantees that 
$j_i \leq k_i(d+1) + d$ for all $i= 1,2,3,4$. Summing these inequalities we get 
$j_1 + j_2 + j_3 + j_4 \leq (k_1 + k_2 + k_3 + k_4) (d+1) + 4d,$ so 
$$nd + n + 4d \leq (k_1 + k_2 + k_3 + k_4) (d+1) + 4d,$$ 
which implies that $(k_1 + k_2 + k_3 + k_4) \geq n$. This completes the proof of the lemma.
\end{proof}
Now we will use the the subdivision $C'$, whose existence is guaranteed by Lemma~\ref{pizza}, to find our 
desired partition of the data points whose partition nerve is an $n$-cycle.

We construct a partition one sector at a time. In the first step, we notice that one of the $n$ sectors, say $Q_1$, has at 
least $d+2$ points by the pigeonhole principle.\\
Use Radon's Lemma to partition the points in $Q_1$ into two sets $S_1$ and $S_2$ so that the convex hulls of $S_1$ and $S_2$ intersect.\\

\begin{center}\includegraphics[width=5cm]{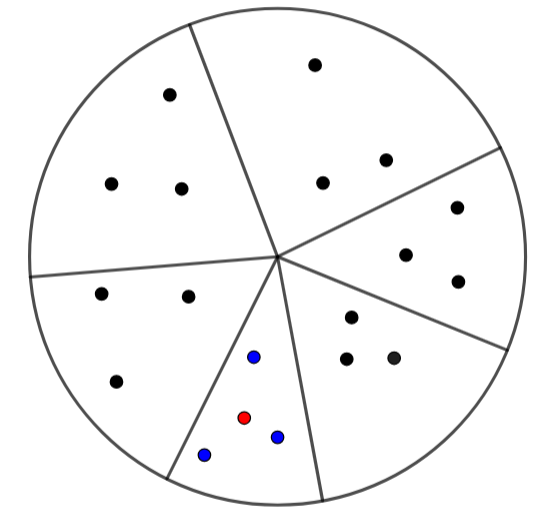}\includegraphics[width=5cm]{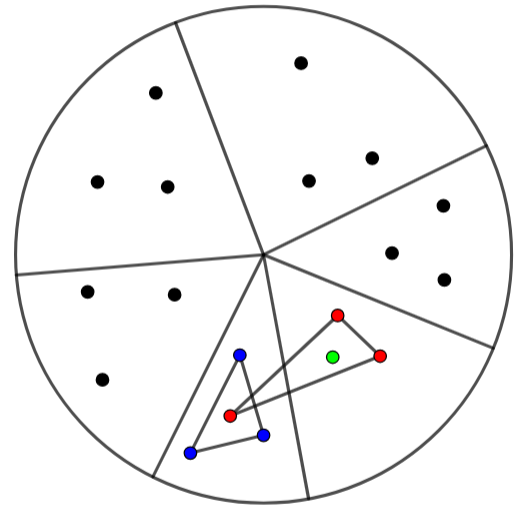}\includegraphics[width=5cm]{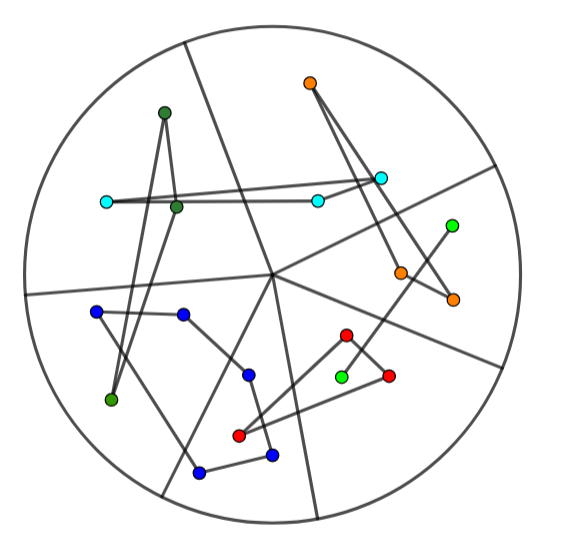}\\

From left to right: Illustration of the construction at steps 1 and 2, and the resulting partition.  \end{center}

In the second step, we denote the slice counterclockwise to $Q_1$ as $Q_2$.\\ By Radon's Lemma, any 
point $\x_2$ in $S_2$ from step 1, combined with the (at least) $d+1$ points in $Q_2$ can be partitioned into two sets 
$S_2'$ and $S_3$ so that the convex hulls of $S_2$ and $S_3$ intersect. Without loss of generality we can assume 
that $\x_2 \in S_2'$, and then set $S_2 = S_2 \cup S_2'$.  In step $k$, where $3 \leq k \leq n-1$, we continue in the 
same way. We denote the slice counterclockwise to $Q_{k-1}$ as $Q_k$.\\ By Radon's Lemma, any point $\x_{k}$ in 
$S_{k}$ from step $(k-1)$, combined with the (at least) $d+1$ points in $Q_{k+1}$ can be partitioned into two sets 
$S_{k}'$ and $S_{k+1}$ so that the convex hulls of $S_{k}'$ and $S_{k+1}$ intersect. Without loss of generality we can 
assume that $\x_{k-1} \in S_{k-1}'$, and then set $S_{k} = S_{k} \cup S_{k}'$.  Finally, in step $n-1$ we set $S_1 = S_1 \cup S_n$.

We claim that the nerve of the resulting partition $\mathcal{P} = \{S_1, S_2, \dots, S_n\}$ is the $n$-cycle.\\
This is a consequence of two facts: We used Radon's Lemma to guarantee that any two subsets appearing in the same 
sector have intersecting convex hulls. Each subset appears in at most two sectors, and since two adjacent sectors have a 
combined angle of at  most $\pi$ radians there is a line separating any two subsets that do not appear in the same sector.
Thus we have that $\conv(S_i) \cap \conv(S_j) \neq \emptyset$ if and only if there is some sector containing points from both 
$S_i$ and $S_j$. If we let ${\bf v}_i$ denote the vertex of the nerve corresponding to subset $S_i$, we see that the edges of 
$\mathcal{N}(\mathcal{P})$ consist precisely of $({\bf v}_n, \bf{v}_1)$ and $(\bf{v}_i, \bf{v}_{i+1})$ where $i \in [n-1]$. This completes the proof.
\end{proof}


\section{Improved Tverberg Numbers of Special Trees and in Low Dimensions}~\label{specialtrees}

\subsection{Proof of Theorem \ref{improvedtrees}: Better bounds for Tverberg numbers of caterpillar trees}

To make the notation easier, we adopt the following convention throughout the proof of Theorem \ref{improvedtrees}: All point sets $S \subset \R^d$ are indexed in increasing order with respect to their first coordinate. That is, if $S = {\x_1, \x_2, \dots, \x_n}$,
with $\x_i = (x_{i1}, x_{i2}, \dots, x_{id})$, then we assume that $x_{11} \leq x_{21} \leq \dots \leq x_{n1}$. Furthermore, by rotating the axes, we can assume that no two points have the same first coordinate and that the previous inequalities are strict.

We first prove the special case of stars in Theorem \ref{improvedtrees} as a lemma. A caterpillar is a sequence of stars, thus we can later use induction again.

\begin{lemma} \label{juststar}
For any $(d+1)(n-1)+1$ points in $\R^d$, we can find a partition of those points with nerve $St_n$, the star tree  on $n$ vertices (i.e., with $(n-1)$ spokes).
\end{lemma}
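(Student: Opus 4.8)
The plan is to induct on $n$, building the star one spoke at a time: at each step I peel off the $d+1$ points of largest first coordinate and attach them as a fresh leaf to a single, slowly growing center class. Throughout I use the convention fixed above, so the points are labeled $\x_1, \dots, \x_m$ with strictly increasing first coordinates, where $m = (d+1)(n-1)+1$. The base case $n=1$ is trivial (one point, one color), and for $n=2$ we have $m = d+2$ points, so Radon's lemma (the $m=2$ case of Tverberg's theorem) splits them into classes $A, B$ with $\conv(A) \cap \conv(B) \neq \emptyset$; calling $A$ the center and $B$ the single leaf yields nerve $K_2 = St_2$.

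For the inductive step I would discard the $d+1$ rightmost points $\x_{m-d}, \dots, \x_m$ and apply the induction hypothesis to the remaining $(d+1)(n-2)+1$ points $\x_1, \dots, \x_{m-d-1}$, obtaining a partition with center class $C$ and leaf classes $L_1, \dots, L_{n-2}$ whose nerve is $St_{n-1}$. I then pick any $c^* \in C$ and apply Radon's lemma to the $d+2$ points $\{c^*, \x_{m-d}, \dots, \x_m\}$, obtaining a Radon partition into the part $A$ containing $c^*$ and the complementary part $B$, with $\conv(A) \cap \conv(B) \neq \emptyset$. Finally I recolor: set the new center to $C' := (C \setminus \{c^*\}) \cup A$ and the new leaf to $L_{n-1} := B$, leaving $L_1, \dots, L_{n-2}$ unchanged.

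It then remains to verify that this $n$-class partition has nerve $St_n$. Since $c^* \in A$ we have $C' \supseteq C$, so $\conv(C') \supseteq \conv(C)$ still meets every $\conv(L_i)$ for $i \le n-2$; and $\conv(A) \cap \conv(B) \neq \emptyset$ together with $A \subseteq C'$ produces the center-to-$L_{n-1}$ edge. For the absence of leaf-leaf edges, note $B \subseteq \{\x_{m-d}, \dots, \x_m\}$ (because $c^* \in A$), so every point of $L_{n-1}$ has first coordinate at least $x_{m-d,1}$, whereas $L_1, \dots, L_{n-2}$ lie among $\x_1, \dots, \x_{m-d-1}$ and hence have first coordinate at most $x_{m-d-1,1} < x_{m-d,1}$. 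A coordinate hyperplane $\{x_1 = t\}$ with $x_{m-d-1,1} < t < x_{m-d,1}$ thus strictly separates $L_{n-1}$ from all earlier leaves, giving $\conv(L_{n-1}) \cap \conv(L_i) = \emptyset$, while the $L_i$ stay pairwise disjoint by induction. Exactly the star edges are therefore present.

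The main obstacle is precisely this last bookkeeping: one must guarantee that adding a new spoke neither destroys an existing center-leaf intersection nor creates a forbidden leaf-leaf intersection. Both are handled by two structural observations, namely that the center is only ever enlarged (so all previous intersections survive) and that each new leaf sits strictly to the right of every previously placed point (so it is separated from earlier leaves by a vertical hyperplane). This is the one place the general-position ordering hypothesis is essential, and it is what lets the argument dispense with the convex- or cyclic-position machinery used for general trees.
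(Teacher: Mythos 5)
Your proof is correct and takes essentially the same approach as the paper's: both induct on the number of spokes, apply Radon's lemma to a single point of the center class together with the $d+1$ rightmost points, absorb the Radon part containing that center point into the center, and use the first-coordinate ordering to separate the new leaf from all earlier leaves by a vertical hyperplane. The only difference is cosmetic---you peel off the rightmost $d+1$ points and recurse downward, whereas the paper builds upward from $St_n$ to $St_{n+1}$---so the two arguments coincide.
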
 

\begin{proof}[Proof of Lemma \ref{juststar}]
We prove this by induction on $n$. For $n = 1$, the partition of one point to get $St_1$ is obvious.
	Now assume the result is true for some $n$. We need to show that any $(d+1)n + 1$ points can be partitioned with partition nerve $St_{n+1}$. Let $M = (n-1)(d+1)+1$. By induction hypothesis, the subset $\{\x_1, \dots, \x_M\}\subset S$ admits a partition $\mathcal{P} = \{A_1,\ldots,A_n\}$ with $\mathcal{N}(\mathcal{P}) \simeq St_n$.  Without loss of generality, assume that $A_1$ is the central vertex of the star graph. Let $\x \in S$ be some point in $A_1$. By Radon's lemma, there is a way to partition the $d+2$ points $\x, \x_{M}, \x_{M+1}, \dots \x_{M+d+1}$ into two sets $X,Y$ with $\conv(X) \cap \conv(Y) \neq \emptyset$, and we can assume that $\x \in X$. The set $Y$ intersects $A_1 \cup X$ but does not intersect any of $A_i$, $2 \le i \le n$, because every point in $Y$ has larger first coordinate than any point in $A_i$. Then we see $\{A_1\cup X, A_2,\ldots, A_n, Y\}$ is a partition which will induce the star graph $St_n$. 
\end{proof}

	
\begin{proof}[Proof of Theorem \ref{improvedtrees}]	
Now we prove that  for every caterpillar tree $T_n$ with at most $n$ nodes, every set $S$ with 
at least $(d+1)(n-1) +1$ points in $\R^d$ admits a partition $\mathcal{P}$ with $\mathcal{N}(\mathcal{P}) \simeq T_n$. 
An illustration of the partition constructed in the proof is given in the Figure~\ref{caterpillars}.
The proof is by induction on the length of the central path in $T_n$, which we will denote by $m$. The induction hypothesis says that
for every $m \in \mathbb{N}$ and any caterpillar tree $T_n$ with $n$ vertices and a central path of length $m$ the following two statements hold:\\
(1) Every set $S$ of $(d+1)(n-1) +1$ points in $\R^d$ admits a partition $\mathcal{P}$ with $\mathcal{N}(\mathcal{P}) \simeq T_n$\\
(2) Denote by $v$ the last vertex of the central path, and denote by $St_{k+1}$ the star subgraph induced by $v$ and its $k$ neighbors. Then the subsets in $\mathcal{P}$ corresponding to vertices in $St_{k+1}$ are comprised of the $(d+1)k + d + 1$ points in $S$ with largest first coordinate.\\

If the length of the central path is one, both parts of the induction hypothesis follow by applying Lemma~\ref{juststar}.	
	Assume the result holds if the central path is of length $m$. We consider caterpillar graphs which have central paths of length $m+1$. Let $G$ be such a graph with $n$ vertices. 
	We consider the endpoint of the path $v_{m+1}$ and the vertex prior $v_{m}$. If we consider the subgraph of $G$ consisting of the path $v_1, \ldots, v_m$ and all vertices adjacent to it except $v_{m+1}$, 
	this is a caterpillar graph with a path of length $m$. Let $p$ denote the number of vertices of this graph. By induction hypothesis, we can represent this graph using the $(d+1)(p-1)+1$ points $\x_1, \dots, \x_{(d+1)(p-1)+1}$.
We will have the partition $\{A_1,\dots, A_p \}$ where we take $A_1$ to be the set corresponding to $v_m$. Then take a point $\x \in A_1$ and the next $d+1$ points $\x_{(d+1)(p_1)+2}, \dots, \x_{(d+1)(p_1)+d+2}$ to have a Radon partition $X, Y$ with $\x \in X$. Our new partition will be $\{A_1 \cup X, A_2, \ldots, A_p, Y\}$. $Y$ will correspond to the vertex $v_{m+1}$ and will not intersect any of the other sets due to having larger first coordinate. In addition, $A_1 \cup X$ will not intersect any new sets by how we have arranged the points due to the induction hypothesis. Now as in the proof of the lemma, we can add new sets by considering $(d+1)$ points in iteration for each of the other vertices adjacent to $v_{m+1}$. Since there were $n-p$ vertices and we used $(d+1)$ points for each, in total we used $(d+1)(n-1)+1 + (d+1)(n-p) = (d+1)(n-1)+1$ points. This is the desired number.

Thus we have proved the induction hypothesis. To complete the proof of the theorem, we note that if we have more than $(d+1)(n-1)+1$ points, we can apply the induction hypothesis to find the desired partition of the $(d+1)(n-1)+1$ points $\x_1, \x_2, \dots \x_{(d+1)(n-1)+1}$, then add any remaining points to the subset corresponding to the endpoint of the central path in the caterpillar graph. 
\end{proof}

\begin{figure}[h]
		\centering\includegraphics[width=200px]{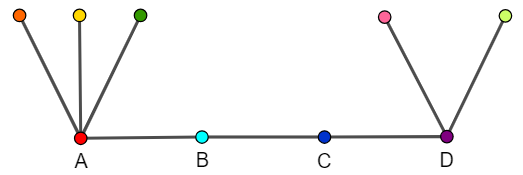}
\caption{An example caterpillar graph $G$ with 9 vertices.}
	\centering\includegraphics[width=440px]{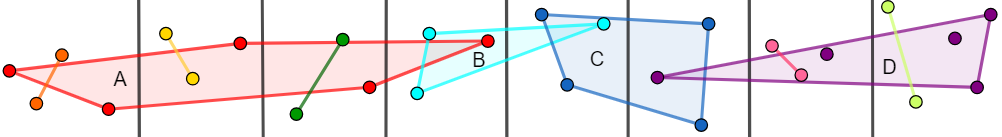}
		\caption{An example of how a set of points can be partitioned with nerve $G$. The vertical lines indicate how we start with a Radon partition of the leftmost $d+2$ points, then partition the points from left to right, considering $d+1$ more points at each step. Notice there are extra points on the right, which are added to the subset corresponding to the last vertex on the central path.}~\label{caterpillars}
		\label{star-proof1}
	\end{figure}

\subsection{Proof of Theorem \ref{lowdim}:  Tverberg numbers of trees in dimension two}

Now we focus on the situation in two dimensions. 

\begin{lemma}~\label{goodline}
	Let $S \in \mathbb{R}^2$ be a set of $2n$ points in the plane. Let $L_{\p_1\p_2}$ denote the line segment between points $\p_1$ and $\p_2$. Suppose that there exists $\p_1, \p_2 \in X$  such that  $L_{\p_1\p_2}$ divides the remaining points into two sets $A, B$ each of size $n-1$ and such that for any $\boldsymbol{a} \in A, \boldsymbol{b} \in B$, we have that $L_{\boldsymbol{a}\boldsymbol{b}}$ intersects $L_{\p_1 \p_2}$. Then it is possible to pair off elements $\boldsymbol{a}_i \in A$, $\boldsymbol{b}_i \in B$, such that for $i,j=1,\ldots,n-1$, $i \ne j$, $L_{\boldsymbol{a}_i\boldsymbol{b}_i}$ does not intersect $L_{\boldsymbol{a}_j\boldsymbol{b}_j}$. 
\end{lemma}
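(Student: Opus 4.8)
The plan is to first extract from the crossing hypothesis the stronger statement that $A$ and $B$ are \emph{linearly separated}, and only then to build the non-crossing matching by a convex-hull peeling induction. Let $\ell$ be the full line through $\p_1$ and $\p_2$. If $\boldsymbol{a}\in A$ and $\boldsymbol{b}\in B$, then the segment $L_{\boldsymbol{a}\boldsymbol{b}}$ meets the segment $L_{\p_1\p_2}\subset\ell$, so in particular it crosses $\ell$; since the configuration is in general position, no point of $A\cup B$ lies on $\ell$, and hence $\boldsymbol{a}$ and $\boldsymbol{b}$ lie in opposite open half-planes of $\ell$. Fixing a single point of $B$ then forces all of $A$ into one open half-plane, and fixing a single point of $A$ forces all of $B$ into the complementary one. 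Thus, after relabeling, $A$ lies strictly above $\ell$ and $B$ strictly below it. This separation is the only consequence of the hypothesis I intend to use.

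Next I would induct on $k=|A|=|B|=n-1$. The case $k\le 1$ is immediate, so assume $k\ge 2$, whence $A\cup B$ is a set of at least four points in general position and $\conv(A\cup B)$ is a genuine convex polygon. Because $A$ and $B$ lie strictly on opposite sides of $\ell$ and both are nonempty, $\ell$ passes through the interior of this polygon and crosses its boundary in exactly two points, each lying in the relative interior of an edge (no vertex of $A\cup B$ lies on $\ell$). Each such edge joins a vertex above $\ell$ to one below, i.e.\ a point of $A$ to a point of $B$; call one of them $e=L_{\boldsymbol{a}_0\boldsymbol{b}_0}$ with $\boldsymbol{a}_0\in A$, $\boldsymbol{b}_0\in B$. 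This is the key structural point: the separation guarantees a \emph{bichromatic hull edge} at every stage of the peeling.

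I would then make $\{\boldsymbol{a}_0,\boldsymbol{b}_0\}$ the first matched pair and recurse on $A'=A\setminus\{\boldsymbol{a}_0\}$ and $B'=B\setminus\{\boldsymbol{b}_0\}$. These still have equal size $k-1$ and remain separated by the same line $\ell$, so the inductive hypothesis produces a non-crossing matching $M'$ of $A'$ with $B'$. Since $e$ is an edge of $\conv(A\cup B)$ and the points are in general position, every remaining point of $A'\cup B'$ lies strictly on one side of the line spanned by $e$; consequently every segment of $M'$, having both endpoints strictly inside that open half-plane, is disjoint from $e$. Hence $M'\cup\{e\}$ is a non-crossing perfect matching of $A$ with $B$, completing the induction.

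I expect the only real subtlety to be the justification that a bichromatic hull edge always exists, which is exactly where the separation—and hence the hypothesis—is essential: without it an entire color class could be nested inside $\conv$ of the other, and the peeling step would break down. The remaining verifications, namely that deleting a matched pair preserves both the size balance and the separation, and that a hull edge cannot be crossed by segments lying strictly to one side of it, are routine planar-convexity facts. As an alternative route, once the separation is in hand one could instead apply the Ham-Sandwich theorem recursively to split $A$ and $B$ into equal halves by a single line and match within each half; this likewise sidesteps any nesting issue and yields the same conclusion.
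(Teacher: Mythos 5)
Your proof is correct and follows essentially the same route as the paper's: both arguments find a bichromatic edge of $\conv(A\cup B)$ (guaranteed by the linear separation of $A$ and $B$ by the line through $\p_1,\p_2$), match that pair, peel it off, and recurse, using the fact that all remaining points lie strictly on one side of a hull edge so no later segment can cross it. Your write-up is somewhat more careful than the paper's — you explicitly derive the separation from the crossing hypothesis and justify the existence of a bichromatic hull edge via the line crossing the hull boundary, steps the paper simply asserts — but the underlying argument is identical.
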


\begin{proof}
	Suppose we have points $\p_1$ and $\p_2$ as hypothesized and partition the remaining points into $A$ and $B$. Let $L$ be the line between $\p_1$ and $\p_2$. To pair off the points, we consider the vertices of $conv(A\cup B)$. Since $L$ separates the points of $A$ and $B$, we must have that there are a pair of adjacent vertices of $conv(A\cup B)$ such that one, $\boldsymbol{a}_1$, is a member of $A$ and the other $\boldsymbol{b}_1$, a member of $B$. The segment between this pair cannot intersect the segment between any other pair of points as this segment forms the boundary of the convex hull. We pair off these two points and then consider $conv(A\setminus\{\boldsymbol{a}_1\}\cup B\setminus\{\boldsymbol{b}_1\})$. We see that $L$ separates $A\setminus\{\boldsymbol{a}_1\}$ and $B\setminus\{\boldsymbol{b}_1\}$, so we can repeat this argument to pair off $\boldsymbol{a}_2$ and $\boldsymbol{b}_2$. Continuing in this fashion until we have paired off all the elements, we will have a pairing $(\boldsymbol{a}_1, \boldsymbol{b}_1), (\boldsymbol{a}_2, \boldsymbol{b}_2), \ldots, (\boldsymbol{a}_n, \boldsymbol{b}_n)$ where $L_{\boldsymbol{a}_i\boldsymbol{b}_i}$ does not intersect $L_{\boldsymbol{a}_j\boldsymbol{b}_j}$ for $i \ne j$.
\end{proof}

\begin{proof}[Proof of Theorem \ref{lowdim} part (A)]
	Let $A \in \mathbb{R}^2$ be a collection of $2n$ points in general position in the plane. Our goal will be to find a pair of points which can separate the remaining points into two sets of equal size so we can apply the above lemma. This will not always be possible, so we will try to make the size of the two sets as close as possible.
	
	To do this, we will consider the vertices of the convex hull of $A$. We pick arbitrarily a vertex $\p_1$ of $conv(A)$ and order the remaining vertices $\p_2,\ldots,\p_k$ in counter-clockwise order where $k$ is the number of vertices. For $i=2,\ldots,k$, we divide the remaining vertices of $A$ into two sets $B_i, C_i$ where $B_i$ is the set of vertices in $A$ to the left of $L_{\p_1\p_i}$ and $C_i$ is the set of vertices to the right of $L_{\p_1\p_i}$. We note that the size of $B_i$ decreases from $2n-2$ to $0$ as $i$ increases and the size of $C_i$ increases from $0$ to $2n-2$.
	
	We consider two cases. The first case is that there exists $i$ such that $|B_i| = |C_i| = n-1$ and then we can apply the above lemma as the line segment between every pair of points in $B_i \times C_i$ intersects $L_{\p_1\p_i}$ since $L_{\p_1\p_i}$ separates $B_i$ and $C_i$ and $\p_1,\p_i$ are vertices of $\conv(A)$. Then we have a pairing $(\boldsymbol{b}_1, \boldsymbol{c}_1), \ldots, (\boldsymbol{b}_{n-1}, \boldsymbol{c}_{n-1})$ where for any two pairs the segments do not intersect, but each intersects $L_{\p_1\p_i}$. Then the partition $\{\{\boldsymbol{b}_1, \boldsymbol{c}_1\}, \ldots, \{\boldsymbol{b}_{n-1},\boldsymbol{c}_{n-1}\}, \{\p_1, \p_i\}\}$ is a partition which induces the star graph $S_n$. For an example of this case and how to partition the points, see Figure \ref{star-proof-case1}.
	\begin{figure}[h]
		\centering\includegraphics[width=150px]{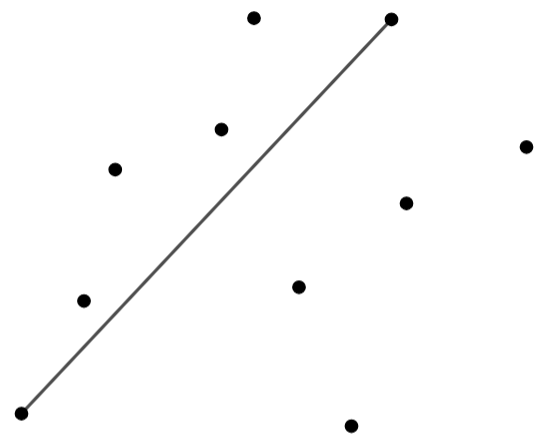}
		\centering\includegraphics[width=150px]{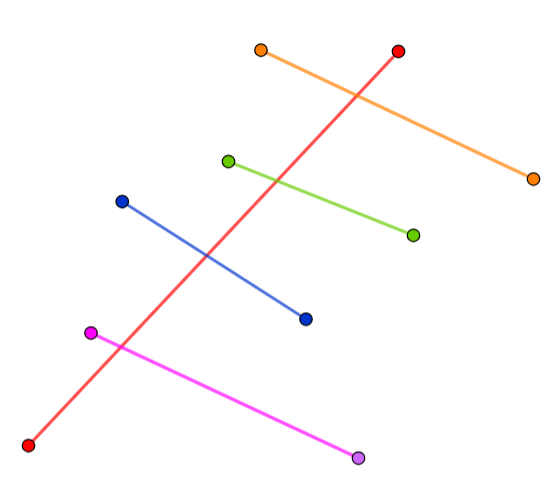}
		\caption{In the first case, there is a partition which divides the remaining points into two sets of equal size. Then we can pair off points such that the segment connecting them intersects the dividing line, but no other segment.}
		\label{star-proof-case1}
	\end{figure}
	
		The second case is that there does not exist such an $i$. In this case, we find $i$ such that $|B_i| > |C_i|$ and $|B_{i+1}| < |C_{i+1}|$.  Set $D = \{\p_1, \p_i, \p_{i+1}\}$ and notice that $\conv(D)$ must contain at least one point of $S$ in it's interior. $D$ will form the center vertex of our star graph.  See Figure \ref{star-proof1} for a depiction of this central triangle.
	
	\begin{figure}[h]
		\centering\includegraphics[width=100px]{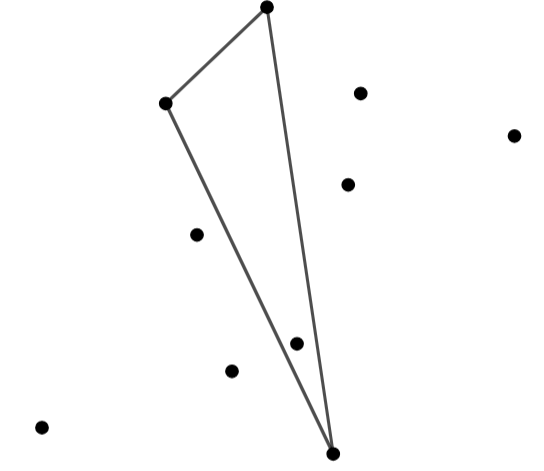}
		\centering\includegraphics[width=100px]{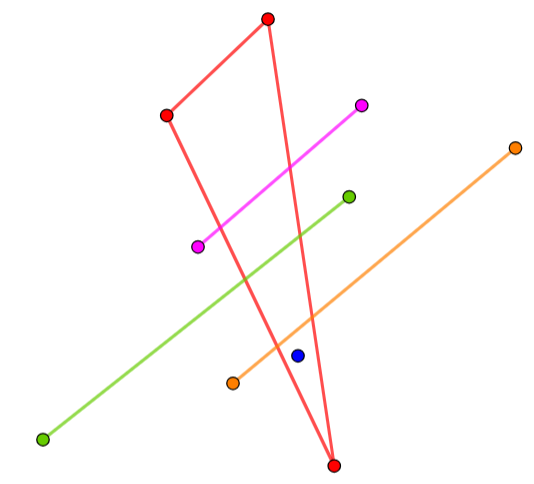}
		\caption{In the second case, we find a central dividing triangle of a given point configuration.  Then we pair off as many points on opposite sides of the triangle as possible using Lemma~\ref{goodline}, and make points in the interior of the triangle singletons until we have $n$ subsets. Any extra points are added to the subset containing the central dividing triangle.}
		\label{star-proof1}
	\end{figure}
	
 Now, using the above lemma, pair off points from $B_{i+1}$ and $C_i$ to form disjoint segments which will intersect $conv(D)$, and let every point in the interior of $D$ be a singleton (which will not intersect any of the segments since the points are in general position). Denote this partition as $\mathcal{P}$.
 
 $\mathcal{N}(\mathcal{P})$ is clearly a star graph, so it suffices to show $\mathcal{N}(\mathcal{P})$ has at least $n$ nodes (we can merge the subsets corresponding to any extra nodes with $D$, as $conv(D)$ already intersects every subset). To see this, first note that average number of points in each subset of $\mathcal{P}$ is at most two, since $\mathcal{P}$ has one subset of size three, at least one singleton, and the rest of the subsets are either singletons or pairs. On the other hand, the average number of points in each subset is equal to $2n$ divided by the number of subsets, so there must be at least $n$ subsets in $\mathcal{P}$. Thus $\mathcal{N}(\mathcal{P})$ has at least $n$ nodes, as was to be shown.
	
\end{proof}

Now we move to the proof of Theorem \ref{lowdim} part (B):
As a consequence of Lemma~\ref{ordertype}, when enumerating partition induced graphs it is enough to consider the partitions of combinatorial types of point sets. We can check whether a given simplex complex is $2$-partition induced on a representative for 
each order type.
	

To complete part (B) we relied on an explicit computer enumeration of all order types on small points set provided by  \cite{Aichholzer}. 
There exists exactly one point configuration for which it is impossible to generate $P_4$. This point configuration is displayed in 
Figure \ref{no4path}.  Its specific coordinatization is $A (222,243)$, $B (238, 13)$, $C (131,50)$, $D (154,105)$, $E (166,145)$ ,$F (134,106)$
$G (174,188)$, $H (18,51)$. 
For every other point configuration, we found a partition which induced the path graph $P_4$.
From this we assert that $\Tv(P_4,2) \ge 9$.  Since  we also found a partition inducing $P_4$ for every single order type on nine points
we are sure that $\Tv(P_4,2)=9$ because in the case $10$ or more points, we can use the weaker bound given in the proof of Theorem~\ref{improvedtrees} part (B).

Similarly for the cycle $C_4$ we have the configuration with coordinates $A (0,0)$, $B (8,5)$, $C (18,3)$, $D (7,4)$,
$E (14,5)$,
$F (10,8)$,
$G (11,7)$,
$H (14,17)$,
$I (11,6)$,
$J (12,12)$, which gives the desired lower bound. The upper bound is given by following the proof of   Theorem~\ref{tverbertrees+cycles} part (B), except starting with any set of 13 points (the bound given in the theorem is higher since it accounts for divisibility issues that can occur in certain cases).

\begin{figure}[h]
	\centering\includegraphics[width=200px]{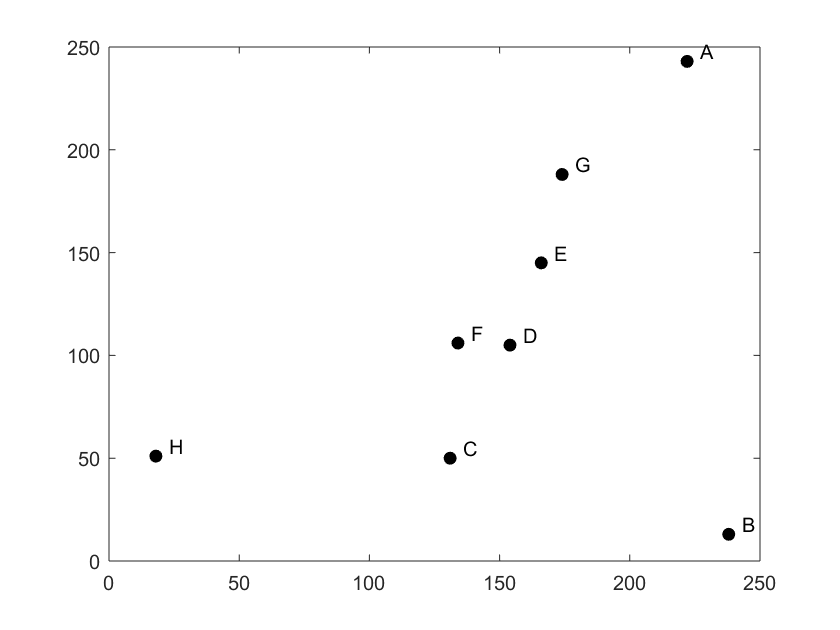} \hskip .5cm \centering\includegraphics[width=200px]{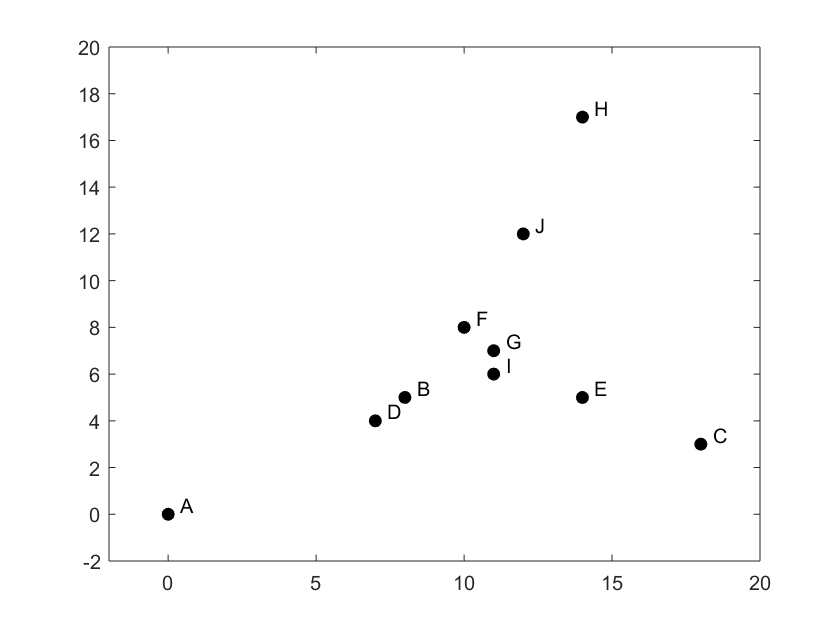}
	\caption{Two points configuration which cannot be partitioned to induce, respectively, $P_4$ (left on eight points) nor $C_4$ 
	(right ten points).} \label{no4path}
\end{figure}


\section{Final remarks and open problems} \label{final}

In this paper we generalize Tverberg's theorem by showing that many simplicial complexes, called Tverberg complexes, are always induced as the nerve of some partition of any sufficiently large set of points in a fixed dimension. But the study of Tverberg complexes abounds with unsolved questions. We conclude by listing a few:
\begin{enumerate}
\item What is the exact value of $\Tv(T_n,d)$ where $T_n$ is a tree with $n$ nodes?  Is $(d+1)(n-1)+1$ the correct value? What about the case of $d=2$?
\item What is the computational complexity of determining if a point configuration can partition induce a given graph?
\item What is the computational complexity of computing the Tverberg numbers of a given Tverberg complex, such as a tree?
\item Are there topological versions of Tverberg theorems for other simplicial complexes?
\item Is there a graph $G$ which is not $3$-Tverberg?
\item Is there a complex $K$ which is not $d$-Tverberg for any $d$?
\item Is there a complex $K$ and $i,j \in \mathbb{N}$, $i <j$ so that $K$ is $i$-Tverberg but not $j$-Tverberg? 
\end{enumerate}

{\bf Acknowledgements:} We are truly grateful to Florian Frick, Steve Klee,  Fr\'ed\'eric Meunier, Luis Montejano, David Rolnick, and 
Pablo Sober\'on, who gave us useful suggestions and encouragement in the early stages of this project. The research of the first, second and 
fourth author  were partially supported by NSF grant DMS-1522158 and NSF grant DMS-1818969. Deborah Oliveros was supported 
by PASPA (UNAM) and CONACYT  during her sabbatical visit to UC Davis, as well as by Proyecto PAPIIT 104915, 106318 and CONACYT Ciencia B\'asica 282280.  
This author would also like to express her appreciation of the UC Davis Mathematics department's hospitality during her visit.

\bibliographystyle{amsalpha}
\bibliography{references__1}

\newcommand{\etalchar}[1]{$^{#1}$}
\def\cprime{$'$}
\providecommand{\bysame}{\leavevmode\hbox to3em{\hrulefill}\thinspace}
\providecommand{\MR}{\relax\ifhmode\unskip\space\fi MR }
\providecommand{\MRhref}[2]{%
  \href{http://www.ams.org/mathscinet-getitem?mr=#1}{#2}
}
\providecommand{\href}[2]{#2}
\begin{thebibliography}{BLVS{\etalchar{+}}93}

\bibitem[AAK02]{Aichholzer}
O.~Aichholzer, F.~Aurenhammer, and H.~Krasser, \emph{Enumerating order types
  for small point sets with applications}, Order \textbf{19} (2002), no.~3,
  265--281.

\bibitem[BLVS{\etalchar{+}}93]{OMbook}
A.~Bj\"orner, M.~Las~Vergnas, B.~Sturmfels, N.~White, and G.~M. Ziegler,
  \emph{Oriented matroids}, Encyclopedia of Mathematics and its Applications,
  vol.~46, Cambridge University Press, Cambridge, 1993. \MR{1226888}

\bibitem[BS17]{TvSurvey}
I.~{B{\'a}r{\'a}ny} and P.~{Sober{\'o}n}, \emph{{Tverberg's theorem is 50 years
  old: a survey}}, ArXiv e-prints (2017).

\bibitem[CD00]{CordovilDuchet}
R.~Cordovil and P.~Duchet, \emph{Cyclic polytopes and oriented matroids},
  European Journal of Combinatorics \textbf{21} (2000), no.~1, 49 -- 64.

\bibitem[CFS10]{CDFhypergraphramsey}
D.~Conlon, J.~Fox, and B.~Sudakov, \emph{Hypergraph {R}amsey numbers}, J. Amer.
  Math. Soc. \textbf{23} (2010), no.~1, 247--266. \MR{2552253}

\bibitem[Eck93]{EckhoffIntervalGraph}
J.~Eckhoff, \emph{Extremal interval graphs}, J. Graph Theory \textbf{17}
  (1993), no.~1, 117--127. \MR{1201250}

\bibitem[ES35]{erdosszekeres35}
P.~Erd\"os and G.~Szekeres, \emph{A combinatorial problem in geometry},
  Compositio Math. \textbf{2} (1935), 463--470. \MR{1556929}

\bibitem[Gr67]{Gbook}
B.~Gr\"unbaum, \emph{Convex polytopes}, With the cooperation of V. Klee, M. A.
  Perles and G. C. Shephard. Pure and Applied Mathematics, Vol. 16,
  Interscience Publishers John Wiley \& Sons, Inc., New York, 1967.
  \MR{0226496}

\bibitem[LB62]{Lekkeikerker}
C.~Lekkerkerker and J.~Boland, \emph{Representation of a finite graph by a set
  of intervals on the real line}, Fundamenta Mathematicae \textbf{51} (1962),
  no.~1, 45--64 (eng).

\bibitem[Mat02]{Mbook}
J.~Matou{\v{s}}ek, \emph{Lectures on discrete geometry}, Graduate Texts in
  Mathematics, vol. 212, Springer-Verlag, New York, 2002.

\bibitem[MS16]{Morris}
W.~Morris and V.~Soltan, \emph{The {E}rd{\"o}s-{S}zekeres problem}, Open
  problems in mathematics, Springer, [Cham], 2016, pp.~351--375. \MR{3526941}

\bibitem[Pm85]{Perelman}
G.~Ya. Perel\cprime~man, \emph{Realization of abstract {$k$}-skeletons as
  {$k$}-skeletons of intersections of convex polyhedra in {${\bf R}^{2k-1}$}},
  Geometric questions in the theory of functions and sets, Kalinin. Gos. Univ.,
  Kalinin, 1985, pp.~129--131. \MR{829936}

\bibitem[PS16]{Perles2016}
M.A. Perles and M.~Sigron, \emph{Some variations on {T}verberg's theorem},
  Israel Journal of Mathematics \textbf{216} (2016), no.~2, 957--972.

\bibitem[Rea79]{Reay1979}
J.~R. Reay, \emph{Several generalizations of {T}verberg's theorem}, Israel
  Journal of Mathematics \textbf{34} (1979), no.~3, 238--244.

\bibitem[RGZ97]{ZieglerRichterGebert}
J.~Richter-Gebert and G.~M. Ziegler, \emph{Oriented matroids}, Handbook of
  discrete and computational geometry, CRC Press Ser. Discrete Math. Appl.,
  CRC, Boca Raton, FL, 1997, pp.~111--132. \MR{1730162}

\bibitem[Rou09]{ROUDNEFF2009}
J.P. Roudneff, \emph{New cases of {R}eay{\textquoteright}s conjecture on
  partitions of points into simplices with k-dimensional intersection},
  European Journal of Combinatorics \textbf{30} (2009), no.~8, 1919 -- 1943,
  Combinatorial Geometries and Applications: Oriented Matroids and Matroids.

\bibitem[Stu87]{sturmfelsorder}
B.~Sturmfels, \emph{Cyclic polytopes and {$d$}-order curves}, Geom. Dedicata
  \textbf{24} (1987), no.~1, 103--107. \MR{904552}

\bibitem[Suk17]{Suk}
A.~Suk, \emph{On the {E}rd{\"o}s-{S}zekeres convex polygon problem}, J. Amer.
  Math. Soc. \textbf{30} (2017), no.~4, 1047--1053. \MR{3671936}

\bibitem[Tan11]{Tancer2}
M.~Tancer, \emph{{$d$}-representability of simplicial complexes of fixed
  dimension}, J. Comput. Geom. \textbf{2} (2011), no.~1, 183--188. \MR{2855919}

\bibitem[Tan13]{Tancer}
\bysame, \emph{Intersection patterns of convex sets via simplicial complexes: a
  survey}, Thirty essays on geometric graph theory, Springer, New York, 2013,
  pp.~521--540. \MR{3205172}

\bibitem[Tve66]{tv}
H.~Tverberg, \emph{{A generalization of {R}adon{\textquoteright}s theorem}}, J.
  London Math. Soc. \textbf{41} (1966), no.~1, 123--128.

\bibitem[Weg67]{WegnerPhD}
G.~Wegner, \emph{Eigenschaften der nerven homologisch-einfacher familien im
  $r^n$}, Ph.D. thesis, Georg-August-Universit\H at G\H ottingen, 1967.

\end{thebibliography}

\section{Appendix: Proofs of auxiliary lemmas}\label{appendix}

In this appendix we include proofs of some supplementary lemmas mentioned in the introduction.

\begin{proof}[Proof of Lemma \ref{lowerb}]
Suppose by contradiction $\Tv(K,d)< 2n$. Let $S \subset \R^d$ be a set of points in convex position with $|S|=\Tv(K,d)$. 
By the pigeonhole principle, if we partition $S$ into $n$ disjoint subsets, there must be at least one subset that is a singleton $\{\x\}$. 
Since $K$ is connected, the node corresponding to the singleton $\{\x\}$ is connected, by an edge, to at least one other node, 
implying that $\{\x\}$ is in the convex hull of another subset. However, this is a contradiction as the points are in convex position. 
\end{proof}

\begin{proof}[Proof of Lemma \ref{ordertype}]
\sloppy
To show that $\mathcal{N}^1(\mathcal{P}) = \mathcal{N}^1(\sigma({\mathcal{P})})$ it suffices to show that 
$\conv(P_{i_1})\cap \conv(P_{i_2}) \neq \emptyset$ if and only if $\conv(\sigma(P_{i_1}))\cap \conv(\sigma(P_{i_2})) \neq \emptyset$ for all $i_1, i_2 \in [n]$.
Suppose $\conv(P_{i_1})\cap \conv(P_{i_2}) \neq \emptyset$. Then they contain respectively $P_{i_1}'$ and $P_{i_2}'$ which are an inclusion minimal Radon partition of $S_1$.
 Since $\sigma$ is an order-preserving bijection, $\sigma$ is an isomorphism between oriented matroids (see, for instance \cite{ZieglerRichterGebert}) determined by $S_1$ and $S_2$. The minimal Radon partitions in $S_1$ correspond to the circuits of the oriented matroids and therefore are preserved under $\sigma$. Thus $\conv(\sigma(P_{i_1}'))\cap \conv(\sigma(P_{i_2}')) \neq \emptyset$. The reverse implication is shown by the reasoning applied to $\sigma^{-1}$.
\end{proof}

%

As we mentioned in the Introduction, the graph $K$ in Figure~\ref{fig:test1} is $2$-partition induced (in particular by the partitioned point set in Figure~\ref{fig:test2}), but  not $2$-Tverberg, as implied by the following lemma:
\begin{lemma}\label{badforconvex}
Suppose $S$ is any set of points in convex position in $\R^2$. Then the graph $K$ in Figure~\ref{fig:test1} is not partition induced on $S$.
\end{lemma}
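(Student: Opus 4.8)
The plan is to argue by contradiction and to strip away all metric data at the outset. Suppose some partition $\mathcal{P} = \{S_1,\dots,S_m\}$ of a convex-position set $S \subset \R^2$ has nerve isomorphic to the graph $K$ of Figure~\ref{fig:test1}. By Lemma~\ref{ordertype} the intersection graph $\N^1(\mathcal{P})$ depends only on the order type of $S$, and any two planar point sets of the same cardinality in convex position share a single order type; hence I may assume $S$ is the vertex set of a regular $N$-gon and record $\mathcal{P}$ merely as a cyclic sequence of colors read around the circle. Since $K$ is one-dimensional, $\N(\mathcal{P}) \cong K$ forces the $1$-skeleton $\N^1(\mathcal{P}) \cong K$, so it suffices to prove that $K$ is not the intersection graph of any such cyclic color sequence, for any $N$.

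The engine of the argument is a crossing characterization: for classes $S_i,S_j$ one has $\conv(S_i)\cap\conv(S_j)\neq\emptyset$ if and only if colors $i$ and $j$ \emph{interleave} on the circle, meaning that restricting the cyclic sequence to $\{i,j\}$ yields more than two monochromatic blocks. The ``only if'' direction rests on the same observation that drives Lemma~\ref{lowerb}: a point in strictly convex position lies on the boundary circle and so never belongs to the convex hull of a class not containing it; combining this with the separating-line formulation of disjointness, $\conv(S_i)$ and $\conv(S_j)$ are disjoint exactly when some arc contains all of color $i$ and none of color $j$, i.e.\ the two colors split into two blocks. A consequence I would record first is that, since $K$ is connected with at least two vertices, every class is non-isolated and therefore contains at least two points; this rules out the degenerate singleton classes and fixes the combinatorial model cleanly. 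Thus $\N^1(\mathcal{P})$ is precisely the interleaving graph of the cyclic color sequence.

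The crux is to show that the specific edge/non-edge pattern of $K$ cannot occur as an interleaving graph. I would isolate a small induced subconfiguration of $K$ whose edges demand pairwise interleaving while its simultaneous non-edges force the corresponding colors to occupy pairwise-separated arcs, and then show these two kinds of constraints are jointly unsatisfiable: tracking the cyclic order, the separations imposed by the non-edges force a nesting or linear ordering of the arcs under which at least one of the required interleavings becomes impossible, a contradiction. The reason this holds ``regardless of how many points we use'' is exactly that every constraint is phrased in terms of the cyclic arrangement of the involved colors and the arcs confining them; increasing $N$ only duplicates colors and subdivides arcs, which can create new interleavings but can never relax a separation already forced by a non-edge. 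I expect the main obstacle to be packaging this arc analysis for the particular graph $K$ so that the case split (which color lands in which arc, and in what cyclic order) is genuinely exhaustive and every branch terminates in the same interleaving impossibility, rather than in the verification of the crossing lemma or the order-type reduction, which are routine.
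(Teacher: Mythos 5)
Your setup is sound and, in substance, identical to the paper's: the reduction via Lemma~\ref{ordertype} to a canonical convex-position configuration (points on a circle), the remark that triangle-freeness of $K$ lets you work with the intersection graph alone, the exclusion of singleton classes by the argument of Lemma~\ref{lowerb}, and the characterization of edges as \emph{interleaving} of the two color classes in the cyclic order are all correct. The paper does the same things, just phrased geometrically (regions inside a disc rather than blocks in a cyclic word).

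But the proposal stops exactly where the mathematical content of the lemma begins. Everything through your ``engine'' paragraph is a reformulation of the problem: you have shown that a counterexample would make $K$ the interleaving graph of some cyclic color sequence, but you never show that no such sequence exists. The sentence ``I would isolate a small induced subconfiguration \dots and then show these two kinds of constraints are jointly unsatisfiable'' names no subconfiguration, performs no case analysis, and derives no contradiction; it is a statement of intent, and you concede as much when you call this packaging the ``main obstacle.'' This missing part is precisely what the paper's proof consists of: it first establishes a claim that the independent set $\{A,B,C\}$ of $K$ can be arranged on the circle in only two essentially different ways (via a walk along the boundary bounding the number of alternations between two regions), and then in each case it plays concrete vertices of $K$ against one another --- the two vertices adjacent to all of $A,B,C$ in the first case, and the vertices $D,F,G,H$ followed by $I,J$ in the second --- to produce two classes that are forced both to interleave and to be separated. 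Note also that your hope that a ``small'' subconfiguration suffices is optimistic: the paper's contradiction is not reached until essentially all ten vertices of $K$ and a two-stage case distinction have been used, which is consistent with the fact that $K$ is partition induced on non-convex planar point sets (Figure~\ref{fig:test2}), so any successful argument must exploit quite a lot of $K$'s structure. As it stands, the proposal is a correct reduction plus an unproven combinatorial claim, so the lemma remains unproven.
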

\begin{figure}
\centering
\begin{minipage}{.5\textwidth}
  \centering
  \includegraphics[width=.7\linewidth]{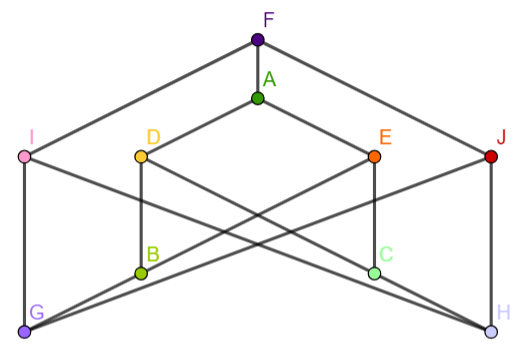}
  \caption{Graph $K$}
  \label{fig:test1}
\end{minipage}%
\begin{minipage}{.5\textwidth}
  \centering
  \includegraphics[width=1\linewidth]{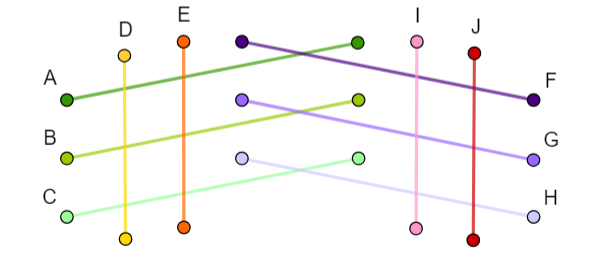}
  \caption{Partitioned point set with nerve $K$}
  \label{fig:test2}
\end{minipage}
\end{figure}

\begin{proof}
We note that since $K$ is a triangle free graph, it suffices to show that it is not the intersection graph of any partition of points in convex position. 
We argue by contradiction. Suppose that there is a set of points in convex position partitioned so that they have the graph above as their intersection graph. 
By Lemma~\ref{ordertype} we may assume the points are arranged on the boundary of a disc $\mathcal{D}$. Denote the convex hull of 
the points corresponding to each node $i$ by region $i$. In the rest of the proof of Lemma~\ref{badforconvex}, we will rely on the following.

\begin{claim} Consider the independent set of nodes $\{A,B,C\}$ in Figure~\ref{fig:test1}. Up to exchanging their labels (note that the graph is symmetric about $A,B,C$), there are 
two possible arrangements of the regions $A$,$B$, and $C$, pictured in Figures~\ref{convexcase1} and \ref{convexcase2}.

\begin{figure}
\centering
\begin{minipage}{.5\textwidth}
  \centering
  \includegraphics[width=.3\linewidth]{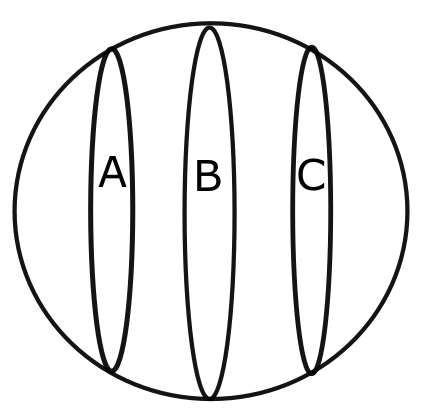}
  \label{convexcase1}
\end{minipage}%
\begin{minipage}{.5\textwidth}
  \centering
  \includegraphics[width=.3\linewidth]{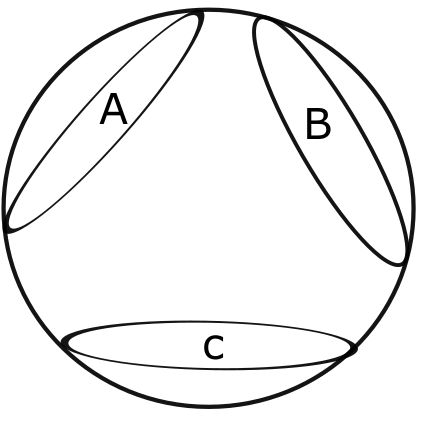}
  \label{convexcase2}
\end{minipage}
\end{figure}
\end{claim}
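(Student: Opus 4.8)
The plan is to reduce the statement to a purely combinatorial classification of the cyclic order of the points. By Lemma~\ref{ordertype} we may assume all points lie on the boundary of a disc, so each region $X\in\{A,B,C\}$ is the convex hull of a set of points on a circle, and these points inherit a cyclic order. The starting observation is the standard separation fact for points in convex position: two color classes on a circle have disjoint convex hulls if and only if they can be separated by a chord, which happens exactly when their points do not \emph{interleave} (alternate) in the cyclic order. Since $\{A,B,C\}$ is an independent set of $K$, the three hulls are pairwise disjoint, so $A$, $B$, and $C$ are pairwise non-interleaving.

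First I would pass to the cyclic word obtained by reading the labels $A$, $B$, $C$ around the circle (ignoring the points of all other colors) and merging maximal runs into \emph{blocks}. Let $k$ denote the number of blocks; since all three colors occur, $k\ge 3$. The core of the argument is to show $k\le 4$ and to pin down the two possible block patterns.

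Suppose some color, say $X$, occupies two or more blocks. Its blocks cut the circle into arcs, and non-interleaving of $X$ with each of the other two colors forces every other color to lie entirely within a single one of these arcs. If two of these arcs contained no non-$X$ point, the corresponding $X$-blocks would be cyclically adjacent and would merge, a contradiction; hence $X$ has exactly two blocks, and the remaining colors $Y,Z$ sit in the two distinct arcs between them, each forming a single block. This yields the pattern $XYXZ$ with $k=4$. If instead no color is split, then each color is a single block and the pattern is $ABC$ with $k=3$. I would also check that no color can be split into three or more blocks (an empty arc again forces a merge) and that two colors cannot be split simultaneously (this reproduces an interleaving), so these two patterns are exhaustive.

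Finally I would translate the two patterns back to geometry and match them to the figures: the pattern $ABC$ with $k=3$ is the arrangement in which each region can be cut off from the other two by a line (the ``convex-position'' picture), while the pattern $XYXZ$ with $k=4$ is the arrangement in which the twice-occurring region lies between, and separates, the other two. Because $K$ is symmetric in $A$, $B$, $C$, we may relabel freely, so which specific color plays the split role is immaterial, giving exactly the two arrangements of Figures~\ref{convexcase1} and~\ref{convexcase2}. I expect the main obstacle to be the bookkeeping in the case analysis above --- correctly arguing that non-interleaving forces each foreign color into a single arc, and that an empty arc forces a block merge --- rather than any deep geometric input.
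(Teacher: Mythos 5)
Your proposal is correct and reaches the same conclusion as the paper, but by genuinely different machinery. The paper's proof is topological and very terse: it removes region $B$ from the disc, splits into cases according to whether $A$ and $C$ lie in the same or in different connected components of the complement, and in the same-component case walks along the boundary of that component, asserting that one ``can only alternate twice between being in regions $A$ and $C$.'' You instead make explicit the fact the paper uses only implicitly --- two color classes on a circle have disjoint convex hulls if and only if they do not interleave cyclically --- and turn the claim into a classification of cyclic block patterns of three pairwise non-interleaving classes, showing the only patterns are $ABC$ (no split color) and $XYXZ$ (exactly one split color). The two case structures correspond exactly: a color $X$ split into two blocks with $Y$ and $Z$ in different gaps is precisely the statement that region $X$ separates the other two inside the disc. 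What your route buys is rigor and exhaustiveness: you actually prove that no color can occupy three or more blocks and that two colors cannot be split simultaneously, which is the content the paper's boundary-walking sentence leaves to the reader; it also avoids the paper's imprecise assertion that $\mathcal{M}-B$ has two connected components (if $B$ has $k$ vertices the complement in the disc has $k$ components; all that matters is whether $A$ and $C$ share one). What the paper's route buys is brevity and a direct visual link to the two figures. One wording slip on your side: you derive a contradiction ``if two of these arcs contained no non-$X$ point,'' whereas a single empty arc already forces the two flanking $X$-blocks to merge; since you invoke exactly that one-empty-arc version when ruling out three or more blocks, this is only a matter of phrasing and leaves no gap.
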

\begin{proof}[Proof of the claim]

The region $\mathcal{M}-B$ has two connected components.
If regions $A$ and $C$ lie in different connected components of $\mathcal{M}-B$, then regions $A,B,$ and $C$ must be arranged as in Figure~\ref{convexcase1}. 
Otherwise, $A$ and $C$ lie in the same connected component, say $\mathcal{N}$, of $\mathcal{M}-B$. If we walk clockwise around  the boundary of $\mathcal{N}$, 
we can only alternate twice between being in regions $A$ and $C$, reducing to the two possibilities shown.
\end{proof}

By the claim, we see that $A,B,$ and $C$ must be arranged (up to symmetry) as in one of the two cases pictured above. If they are arranged as in Figure~\ref{convexcase1}, note that regions $E$ and $F$ both intersect regions $A,B,$ and $C$. In that case it is easy to see that regions $E$ and $F$ must 
intersect, which is a contradiction.

If the regions are arranged as in Figure~\ref{convexcase2}, consider that regions $D, F, G,$ and $H$. Note that region $D$ intersects regions $A,B,C$. 
Also, region $F$ is disjoint from all the regions $B$ through $H$, while intersecting region $A$.
Similarly, region $G$ is disjoint from all the regions $A$ through $H$ except $B$. Also region $H$ is disjoint from all the regions $A$ through $H$ except $C$. Considering the two cases: $F,G,H$ lie in the same connected component of $\mathcal{M} - D$,  or $F,G,H$ lie in different connected components of $\mathcal{M}-D$, it is easy to see that, in both cases,
$F,G,$ and $H$ must be arranged as $A,B,$ and $C$ are in Figure~\ref{convexcase2}. Then $I,J$ are disjoint but both intersect $F,G$ and $H$, which is a contradiction by the argument above. Thus $K$ cannot be the nerve of a set of points in convex position.
\end{proof}

\end{document}